\def \G {\Gamma}
\renewcommand{\a}{\alpha}
\renewcommand{\i}{\iota}
\newcommand{\R}{\mathbb{R}}
\newcommand{\Z}{\mathbb{Z}}
 \newcommand{\e}{\varepsilon}
\renewcommand{\l}{\lambda} 
 \renewcommand{\to}{\rightarrow}
\newcommand{\too}{\longrightarrow} 
\newtheorem*{quest}{Questions}
\newtheorem{thm}{Theorem}[section]
\newtheorem{prop}[thm]{Proposition}
\newtheorem{lem}[thm]{Lemma}
\theoremstyle{definition}
\newtheorem{defn}[thm]{Definition}
\newtheorem*{rem}{Remark}
\newtheorem*{nota}{Notation}
\title{Growth and displacement of free product automorphisms}
\author{Matthew Collins}
\begin{document}
	
	\begin{abstract}

		It is well known for an \emph{irreducible } free group automorphism that its growth rate is equal to the minimal Lipschitz displacement of its action on Culler-Vogtmann space. This follows as a consequence of the existence of train track representatives for the automorphism.
		
		We extend this result to the general - possibly reducible - case as well as to the free product situation where growth is replaced by `relative growth'.   
	\end{abstract}

	\maketitle
	
	\tableofcontents
	
	\section{Introduction}

	This paper was prompted by a question which arose during one of the author's previous papers \cite{Collins2023}. That work was a generalisation of the results of \cite{Dicks1993} from free groups to free products, and focused on the deformation space $\mathcal{O}(G,\mathcal{G})$ - a space of $G$-trees which can be thought of as a generalisation of Culler-Vogtmann space. In \cite{Dicks1993}, the authors focused on irreducible, \emph{growth rate 1} automorphisms of free groups, where the growth rate was defined with respect to word length. However, when generalising their results, we ultimately ended up using irreducible, \emph{displacement 1} automorphisms of free products - the displacement being a value used to describe the action of an automorphism on $\mathcal{O}(G,\mathcal{G})$ (described below).
	
	Our new results held regardless of this distinction, and the link between the growth rate and the displacement of an irreducible free group automorphism is reasonably intuitive given a solid understanding of Culler-Vogtmann space and train track maps. The goal of this paper is to turn that intuition into solid proof, and to extend that proof to a larger class of automorphisms by answering the following:

	\begin{quest}
		
	\medspace
		
		\begin{enumerate}[(1)]
			\item In \cite{Dicks1993}, the growth rate was defined for free groups. What is the ``correct" generalisation of this definition to free products?
			\item When we have found this generalisation, does the growth rate of an \emph{irreducible} free product automorphism equal its displacement in outer space?
			\item More generally, does the growth rate of \emph{any} free product automorphism equal its displacement in outer space?
		\end{enumerate}
	\end{quest}

	\addtocontents{toc}{\protect\setcounter{tocdepth}{1}}
	\subsection*{(1)}

	First we address the definition of the growth rate. In the case of free groups, Dicks \& Ventura \cite{Dicks1993} attribute the following definition of the growth rate in free groups to Thurston:
	
	\begin{restatable*}{defn}{oldgrowthrate}\label{defn:og_growth_rate}
		Let $\a$ be an automorphism of a finitely generated group $G$. Let $E$ be a finite generating set of $G$, and let $l_E$ denote the conjugacy length in the alphabet $E$. Then for $g\in G$ we define the \emph{growth rate of $\a$ with respect to (the conjugacy class of) $g$} as
		\begin{align*}
			\text{GR}(\a, l_E, g) = \limsup_{k\to \infty} \sqrt[k]{l_E(g \a^n)}
		\end{align*}
		The \emph{growth rate of $\a$} is then
		\begin{align*}
			\text{GR}(\a, l_E) = \sup\{\text{GR}(\a,g) \mid g\in G\}
		\end{align*}	
	\end{restatable*}
	
	Using this definition, it can be fairly swiftly proved that the growth rate of an irreducible  automorphism of a free group is equal to its displacement in Culler-Vogtmann space. We will say more about this proof when we talk about Question (2), but when we say that we are searching for the ``correct" definition of growth rate in free products, it is the generalisation of this proof we have in mind.
	
	We elect to use \emph{relative generating sets}, which was inspired by Osin's paper on relatively hyperbolic groups \cite{osin2006}.

	\begin{restatable*}{defn}{relgenset}
		We say that $E\subseteq G$ is a \emph{relative generating set of $G$ with respect to $\mathcal{G}$} if $G$ is generated by the set
		\begin{align*}
			\bigg( \bigcup_{i=1}^k G_i \bigg) \cup E
		\end{align*}
		Where $G = G_1*\ldots*G_k*F_r$ is a free product decomposition corresponding to $\mathcal{G}$.
	\end{restatable*}
	
	From this we can essentially follow the definition of the growth rate in free groups by defining \emph{relative conjugacy length}, \emph{relative Lipschitz equivalence}, and finally the \emph{relative growth rate}.
	
	\begin{restatable*}{defn}{newgrowthrate}\label{defn:relative_growth_rate}
		Let $\a \in \text{Out}(G,\mathcal{G})$, and let $l_E$ be a relative conjugacy length function. Then for $g\in G$ we define the \emph{relative growth rate of $\a$ with respect to (the conjugacy class of) $g$} as
		\begin{align*}
			\text{GR}_{\mathcal{G}}(\a, l_E, g) = \limsup_{k\to \infty} \sqrt[k]{l(g \a^k)}
		\end{align*}
		The \emph{relative growth rate of $\a$} is then
		\begin{align*}
			\text{GR}_{\mathcal{G}}(\a, l_E) = \sup\{\text{GR}(\a,g) \mid g\in G\}
		\end{align*}
	\end{restatable*}
	
	\begin{rem}
		If we consider the case where $G$ is a free group and $\mathcal{G}$ is the trivial free factor system (that is to say, we take the free product decomposition of $G$ consisting of a single free factor - $G$ itself), then all of these ``relative" definitions restrict to their original counterparts, as they should.
	\end{rem}

	\subsection*{(2)} Questions (2) and (3) are both true. In fact, (3) immediately implies (2), but the case of irreducible automorphisms is different enough to deserve separate consideration.
	
	A free product decomposition of a group $G$ determines a set of subgroups $\mathcal{G}$ called a free-factor system. We write $\mathcal{O} = \mathcal{O}(G,\mathcal{G})$ to denote the space of equivalence classes of minimal, cocompact, edge-free $G$-trees whose set of elliptic subgroups is $\mathcal{G}$. Such a space is referred to as \emph{outer space}. $\text{Out}(G,\mathcal{G})$ - the group of outer automorphisms which preserve $\mathcal{G}$ - acts on outer space by ``twisting" the action, and studying the effect of this twist gives us information about the original automorphisms.

	For any two $G$-trees $T,S\in\mathcal{O}$, we write $\Lambda_R(T,S)$ to denote the asymmetric Lipschitz distance, or stretching factor, between them. For an automorphism $\a\in\text{Out}(G,\mathcal{G})$, one can define the \emph{displacement} of $\a$ as $\lambda_\a = \inf\{\Lambda(T,\a T)\mid T\in \mathcal{O}\}$. The \emph{minimally displaced set} of $\a$, $\text{Min}(\a)$, is the set of $G$-trees $T$ in outer space which realise this infimum.
	
	It is well known for an irreducible \emph{free group} automorphism that its growth rate is equal to the minimal Lipschitz displacement of its action on Culler-Vogtmann space. This follows as a consequence of the existence of train track representatives for the automorphism. It can be shown that the required properties of train track representatives in free groups also hold in free products, so the same method can be used.
	
	More specifically, \cite{francavigliamartino2015} proves that:
	\begin{itemize}
		\item The minimally displaced set $\text{Min}(\a)$ is equal to the set of trees in $\mathcal{O}$ which support optimal train track maps $f:T\to \alpha T$
		\item When $\alpha$ is irreducible, $\text{Min}(\a)$ is non-empty.
	\end{itemize}
	From here the proof of (2) follows. We give a complete proof of this irreducible case in Appendix 2.

	\subsection*{(3)} If we drop the irreducibility condition, a problem arises which prevent us from copying the train track proof outright: we cannot guarantee that $\text{Min}(\alpha)$ will be non-empty, and hence we cannot guarantee the existence of an optimal train track map. Happily, however, we can guarantee the existence of a weaker set of maps known as \emph{relative train tracks}.
	
	

	A topological representative $f:T\mapsto \a T$ has an associated \emph{transition matrix} $M= (m_{ij})$, where $m_{ij}$ is the number of times the $f$-image of the $j$-th edge-orbit crosses the $i$-th edge-orbit. By relabelling edges appropriately, it is always possible to write $M$ in block upper triangular form:
	
	$M =
	\left( {\begin{array}{cccc}
			M_1 & ? & ? & ?\\
			0 & M_2 & ? & ?\\
			\vdots & \vdots & \ddots & \vdots\\
			0 & 0 & \cdots & M_n\\
	\end{array} } \right)$
	
	where the matrices $M_1,\ldots M_n$ are either zero matrices or irreducible matrices.
	
	Writing $M$ in this form determines a partition of the edges of $T$: The \emph{$r$th stratum} $H_r$ of $T$ is the subgraph of $T$ given by closure of the union of the edge orbits corresponding to the rows/columns in $M_r$. We say an edge path $\gamma$ in $T_r$ is \emph{$r$-legal} if no component of $\gamma\cap H_r$ contains an illegal turn.
	
	\begin{restatable*}{defn}{RTTmapdefn}[Relative train track] \label{defn:RTTmap}
		Let $T\in \mathcal{O}(G,\mathcal{G})$, let $\alpha \in \text{Out}(G,\mathcal{G})$, and let $f:T\to \alpha T$ be a simplicial topological representative for $\a$. Use this map to divide $T$ into strata as described above. We say that $f$ is a \emph{relative train track map} if the following hold:
		\begin{itemize}
			\item[(1)] \emph{$f$ preserves $r$-germs}: For every edge $e\in H_r$, the path $f(e)$ begins and ends with edges in $H_r$.
			\item[(2)] \emph{$f$ is injective on $r$-connecting paths}: For each nontrivial path $\gamma\in T_{r-1}$ joining points in $H_r \cap T_{r-1}$, the homotopy class $[f(\gamma)]$ is nontrivial.
			\item[(3)] \emph{$f$ is $r$-legal}: If a path $\gamma$ is $r$-legal, then $f(\gamma)$ is $r$-legal.
		\end{itemize}
	\end{restatable*}
	
	\begin{restatable*}{thm}{RTTmapsexist}\cite[Thm 2.12]{Collins1994}\hspace{25pt}
		
		For any automorphism $\a \in\text{Out}(G,\mathcal{G})$, there exists a relative train track map $f:T\to \a T$ on some $T\in \mathcal{O}$.
	\end{restatable*}

	We observe that $M_r$ is the transition matrix of $H_r$, and each of these submatrices will have its own PF-eigenvalue $\mu_r$. It can be shown that, even though a relative train track map will not, in general, satisfy the property of train track maps used in Question (2), it \emph{will} satisfy a similar property on each stratum of $T$ using these $\mu_r$. This gives us the tools we require to prove that question (3) is true:
	
	\begin{restatable*}{thm}{mainThm}
		Let $\a\in\text{Out}(G,\mathcal{G})$. Then the following are equal:
		\begin{itemize}
			\item The relative growth rate of $\a$, $\text{GR}_\mathcal{G}(\a)$.
			\item The largest PF-eigenvalue $\mu_R$ of any relative train track map $f:T\to \a T$, $T\in\mathcal{O}(G,\mathcal{G})$.
			\item The displacement $\lambda_\a$ of $\a$ in $\mathcal{O}$.
		\end{itemize}
	\end{restatable*}
	
	We prove this by proving three inequalites:
	\begin{itemize}		
		\item[\textbf{A:}] $\mu_R \leq \text{GR}_\mathcal{G}(\a)$
		
		\item[\textbf{B:}] $\text{GR}_\mathcal{G}(\a) \leq \lambda_\a$
		
		\item[\textbf{C:}] $\lambda_\a \leq \mu_R$
	\end{itemize}
	
	\textbf{A} follows from existing properties of relative train tracks. \textbf{B} can be proved explicitly using the definition of the right stretching factor. \textbf{C} requires more thought.
	
	Recall the definition of the displacement: $\lambda_\a := \inf_{S\in\mathcal{O}}\Lambda_R(S,\a S)$. Ideally we would prove inequality $\textbf{C}$ by emulating the proof of question (2) and finding a $G$-tree in $\mathcal{O}$ whose right stretching factor is exactly $\mu_R$. However, unless $\a$ is irreducible, this is not always possible. Thus we instead find a \emph{sequence} of $G$-trees whose right stretching factors \emph{tend} towards $\mu_R$.
	
	The lengths of edges in $T$ are determined by the PF-eigenvectors, but these are only determined up to scalar multiplication, so we are free to rescale the edges in each stratum by a constant of our choosing. We choose to rescale each $H_r$ by $N^r$. As $N$ tends to infinity, we observe that the growth in the stratum with the largest PF-eigenvalue becomes greater than that of all other strata. From this the result follows.

	\addtocontents{toc}{\protect\setcounter{tocdepth}{2}}
	\section{Bass-Serre Theory}
	
	\subsection*{- $G$-trees}\hspace{1pt} \\

	\begin{defn}
		A \emph{simplicial tree} is a non-empty, 1-dimensional simplicial complex in which every two points are joined by a unique arc. We call the 1-simplices \emph{edges}, and the 0-simplices \emph{vertices}.
		
		A \emph{metric simplicial tree} is a simplicial tree $T$ together with a metric $d_T$ such that the set of vertices is discrete in the topology induced by $d_T$.
	\end{defn}
	
	\begin{rem}
		The discreteness condition above is equivalent to saying that the lengths of the edges are locally bounded below - that is to say, $\forall v\in V, \exists C>0$ such that for all vertices $w$ adjacent to $v$, $d_T(v,w)\geq C$.
	\end{rem}
	
	Let $G$ be a group.
	
	\begin{defn}
		A \emph{$G$-tree} is a triple $(T, d_T, \cdot)$, where $T$ is a metric simplicial tree, $d_T$ is the metric on $T$, and $\cdot$ is an isometric group action $T\times G \to T, (x, g) \mapsto x \cdot g$. For the sake of brevity of notation, if the specific metric and action are not required, we shall simply denote the triple $(T, d_T, \cdot)$ by $T$.
		
		We say that two $G$-trees are \emph{equivalent} if there exists an equivariant isometry between them.
		
		We say that $T$ is \emph{minimal} if it does not contain a $G$-invariant subtree.
		
		We say that $T$ is \emph{edge-free} if every edge has trivial stabiliser.
	\end{defn}
	
	\begin{rem}
		We say that an action on a $G$-tree $T$ is \emph{without inversions} if no edge is sent to its inverse by an element of $G$. If $T$ does contain an inverted edge, then placing a new vertex at the midpoint of will essentially remove this inversion; Furthemore, since the two trees before and after this operation are equivalent in the above sense, adding this vertex does not affect any relevant properties of $T$. Thus, to simplify our calculations, we shall assume that all our $G$-trees are without inversions.
	\end{rem}
	
	\begin{defn}\label{defn:ellip-hyp}
		Let $(T, d_T, \cdot)$ be a $G$-tree.
		
		We say that an element $g\in G$ is \emph{elliptic} with respect to $T$ if $x\cdot g = x$ for some point $x\in G$. If $g$ is not elliptic, we say it is \emph{hyperbolic}.
		
		We say that a subgroup $H\leq G$ is \emph{elliptic} with respect to $T$ if $x\cdot H = x$ for some point $x\in G$. If $H$ is not elliptic, we say it is \emph{hyperbolic}.
		
		An elliptic subgroup will consist entirely of elliptic elements, but the converse is not necessarily true.
	\end{defn}
	
	\begin{defn}
		We write $\mathcal{O} = \mathcal{O}(G,\mathcal{G})$ to denote the space of equivalence classes of minimal, cocompact, edge-free $G$-trees whose set of elliptic subgroups is $\mathcal{G}$. Such a space is referred to as \emph{outer space}.
	\end{defn}
	
	\subsection*{- Graphs of groups}
	
	\begin{defn}
		A \emph{(Serre) graph} $X$ consists of the following:
		\begin{itemize}
			\item A vertex set $V=V(X)$
			\item An edge set $E=E(X)$
			\item An initial vertex map $\i:E\to V$
			\item An edge reversal map $E\to E, e\mapsto \overline{e}$ such that $e\neq \overline{e}$ and $\overline{\overline{e}} = e$
		\end{itemize}
		We call $\i(\overline{e})$ the \emph{terminal vertex} of $e$, and denote it $\tau(e)$.
	\end{defn}
	
	\begin{defn}\cite[p.198]{cohen} A \emph{graph of groups} $\G$ consists of the following:
		\begin{itemize}
			\item[(i)] A connected graph $X$.
			\item[(ii)] A group $G_v$ for each vertex $v$ of $X$, and a group $G_e$ for each edge $e$ of $X$ such that $G_{\overline{e}} = G_e$.
			\item[(iii)] For each edge $e$ of $X$, a monomorphism $\rho_e : G_e \to G_{\tau e}$.
		\end{itemize}
		If $Y$ has a metric, we say that $\G$ is a \emph{metric graph of groups}.
	\end{defn}
	
	\begin{nota}
		We may write $V(\G)$ and $E(\G)$ to denote the sets $V(X)$ and $E(X)$ respectively.
	\end{nota}
	
	\begin{rem}
		All of the graphs of groups we shall be using in this paper will have trivial edge groups (and hence trivial monomorphisms $\rho_e$). Thus, for the sake of simplicity, we shall restrict our exposition on Bass-Serre Theory to graphs of groups with this property.
	\end{rem}
	
	Let $\G$ be a graph of groups with trivial edge groups. Then the \emph{path group} $\pi(\G)$ is defined by
	\begin{align*}
		\pi(\G) = \frac{\big(\Asterisk_{v\in V(\G)}G_v\big)*F(E(\G))}{N}
	\end{align*}
	
	where $N$ is the normal closure of the set $\{e\overline{e}\mid e\in E(\G)\}$.
	
	A \emph{path} in $\G$ is a sequence $g_0 e_1 g_1 \ldots g_{n-1}e_n g_n$ where $g_{i-1}\in G_{\iota e_i}$ and $g_i\in G_{\tau e_i}$ for all $i$ (so $e_1\ldots e_n$ is an edge path in the graph). If any $g_i = 1$, then we omit it from the notation.
	
	We say a path is \emph{reduced} if it does not contain any subpaths of the form $e \overline{e}$. We can think of $\pi(\G)$ as the group of reduced paths.
	
	\begin{defn}
		Choose a base point $x\in V(\G)$. The \emph{fundamental group} $\pi_1(\G,x)$ of $\G$ at $x$ is the subgroup of $\pi(\G)$ consisting of the reduced paths which start and end at $x$.
	\end{defn}
	
	The isomorphism class of $\pi_1(\G,x)$ does not depend on our choice of $x$. Thus we shall usually omit it from the notation, and simply write $\pi_1(\G)$.
	
	If $X$ denotes the underlying graph of $\G$, then it can be shown that
	\begin{align*}
		\pi_1(\G) \cong \big(\Asterisk_{v\in V(\G)}G_v\big)*F_r
	\end{align*}
	where $F_r \cong \pi_1(X)$ is a free group of rank $r$.
	
	\begin{defn}
		Let $G$ be a group, and let $\G$ be a metric graph of groups. A \emph{marking} on $\G$ is an isomorphism $\phi: G \to \pi_1(\G)$. The pair $(\G,\phi)$ is called a \emph{marked graph of groups}.
	\end{defn}
	
	\subsection*{- The Fundamental Theorem}
	
	Bass-Serre theory describes a process by which one may construct a marked graph of groups from a $G$-tree and, conversely, a $G$-tree from a marked graph of groups. The Fundamental Theorem of Bass-Serre Theory states that these two constructions are mutually inverse, up to isomorphism of the structures involved.
	
	The details of Bass-Serre Theory have been thoroughly explored in the literature (\cite{Bass1993} \cite{cohen}), so we shall simply give a brief description of the two constructions.
	
	\begin{defn}[Bass-Serre Tree]\cite[p.7]{Andrew2021}
		
		Let $(\G,\phi)$ be a marked graph of groups with trivial edge groups and with marking $\phi: G \to \pi_1(\G,x)$. We then define a graph $T$ called the \emph{universal cover}, or \emph{Bass-Serre tree}, of $\G$ as follows:
		\begin{itemize}
			\item The vertex set $V(T)$ is the set of `cosets' $G_v\gamma$, where $\gamma \in \pi(\G)$ is a path from the vertex $v$ to our base point $x$.
			\item Two vertices $G_{v_1}\gamma_1, G_{v_2}\gamma_2 \in V(T)$ are joined by an edge(-pair) if $\gamma_1 = e g_{v_2} \gamma_2$ or $\gamma_2 = e g_{v_2} \gamma_1$ for some edge $e$ and some $g_{v_i}\in G_{v_i}$.
		\end{itemize}
		It can be shown that this graph $T$ is always a tree.
		
		If $g\in G$, then $\phi(g)\in \pi_1(\G,x)$ is a loop in $\G$ - that is to say, a path from $x$ to $x$. Thus we can define a right action $\cdot$ of $G$ on $T$ as follows:
		\begin{align*}
			\forall G_v\gamma \in V(T),\hspace{35pt} G_v\gamma\cdot\phi(g) := G_v (\gamma \phi(g))
		\end{align*}
		This action respects adjacency, sending edges to edges. Together with this action, the Bass-Serre tree is an edge-free $G$-tree.
	\end{defn}
	
	We say two marked graphs of groups are \emph{equivalent} if their universal covers are equivalent as $G$-trees.
	
	\begin{rem}
		Observe that the Bass-Serre tree as we have defined it here is a Serre graph, whereas the definition of $G$-trees we have given views them as simplicial structures. This distinction ultimately matters little - we can reconcile the two viewpoints by thinking of each pair $(e,\overline{e})$ as denoting two orientations of a 1-simplex, rather than being individual edges.
	\end{rem}
	
	\begin{defn}[Quotient Graph of Groups]
		Let $T$ be an edge-free $G$-tree. Then we define of \emph{quotient graph of groups} of $T$ as follows:
		\begin{itemize}
			\item The underlying Serre graph is the quotient graph $T/G$
			\item All edge groups are trivial.
			\item Consider a connected fundamental domain in $T$. This will contain exactly one vertex from each orbit. We assign the stabilizers of these vertices to be the vertex groups of the corresponding vertices in $T/G$.
		\end{itemize}
		It is given by the Fundamental Theorem of Bass-Serre Theory that the fundamental group of this graph of groups is isomorphic to $G$. The action of $G$ on $T$ determines this isomorphism, giving us a marking. 
	\end{defn}
	
	\subsection*{- Free factor systems}
	
	There exists a relation between $G$-trees and free product decompositions of $G$, which can be phrased as follows:
	
	Consider outer space $\mathcal{O} = \mathcal{O}(G,\mathcal{G})$, and recall that trees in this space are cocompact. A $G$-tree is cocompact if and only if it has a finite number of edge orbits and vertex orbits. It follows that a quotient graph of groups $\G$ has a finite number of vertex groups, and hence we can write $G = \pi_1(\G) \cong G_1 * \ldots * G_k * F_r$, where $G_1, \ldots, G_k$ are the vertex groups. (Since we have a choice of vertex groups when constructing $\G$, this free product decomposition is only unique up to conjugates of the free factors).
	
	Conversely, suppose we are given a free product $G = G_1 * \ldots * G_k * F_r$. Then one can easily construct graphs of groups with vertex groups $G_1, \ldots, G_k$ and fundamental group $G$ (for example, see Figure \ref{fig:simpleGOG}). Taking the Bass-Serre trees will give us an outer space $\mathcal{O} = \mathcal{O}(G,\mathcal{G})$ of $G$-trees, where the elliptic subgroups are $\mathcal{G} = \langle H\leq G_i^g \mid g\in G, i=1,\ldots, k \rangle$ - the subgroups of the conjugates of the free factors.
	
	To summarize, a space $\mathcal{O}(G,\mathcal{G})$ determines a family of free product decompositions of $G$, and a free product decomposition of $G$ determines a space $\mathcal{O}(G,\mathcal{G})$. It follows from the Fundamental Theorem of Bass-Serre Theory that these two processes are mutually inverse (up to conjugacy of the free factors). Thus we shall refer to the set $\mathcal{G}$ as a \emph{free factor system}.
	
	\begin{defn}
		Let $\mathcal{G}$ be a free factor system for a group $G$, and let $g\in G$.
		
		We say that $g$ is \emph{$\mathcal{G}$-elliptic} if it lies in a subgroup in $\mathcal{G}$. Otherwise, we say it is \emph{$\mathcal{G}$-hyperbolic}. We shall write $\text{Hyp}(\mathcal{G})$ to denote the set of all $\mathcal{G}$-hyperbolic elements.
	\end{defn}
	
	\begin{rem}
		The notions of elliptic and hyperbolic given here align with those in Definition \ref{defn:ellip-hyp}.
	\end{rem}
	
	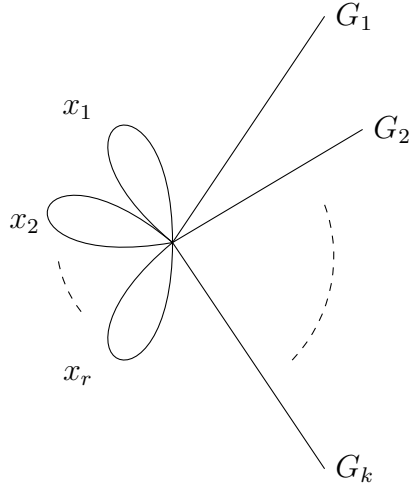
\begin{figure}
		\begin{tikzpicture}[scale = 5]
			\draw (0,0) -- (0.4,0.6) node[anchor=west] {$G_1$};
			\draw (0,0) -- (0.5,0.3) node[anchor=west] {$G_2$};
			\draw (0,0) -- (0.4,-0.6) node[anchor=west] {$G_k$};
			
			\draw (0,0)  to[in=90,out=140,loop] (0,0);
			\draw (0,0)  to[in=140,out=190,loop] (0,0);
			\draw (0,0)  to[in=-140,out=-90,loop] (0,0);
			
			\draw (-0.18,0.3) node[anchor = south east] {$x_1$};
			\draw (-0.32,0.05) node[anchor = east] {$x_2$};
			\draw (-0.18,-0.3) node[anchor = north east] {$x_r$};
			
			\draw[dashed] (0.4,0.1) arc (20:-45:0.4);
			\draw[dashed] (-0.3,-0.05) arc (190:220:0.3);
		\end{tikzpicture}
		\label{fig:simpleGOG}\caption[Figure 1]{A graph of groups with fundamental group $G_1 * \ldots G_k * \langle x_1, \ldots, x_r \rangle$}
	\end{figure}

	\section{Length functions and growth}
	
	\begin{defn}
		By a \emph{length function} on a set $X$, we mean a map $l:X\to \R$ taking non-negative values.
	\end{defn}
	
	\subsection*{- Displacement in Outer Space}
	
	\begin{defn}
		Let $g\in G$, and let $T\in \mathcal{O}(G,\mathcal{G})$. Then we write $l_T(g)$ to denote the \emph{translation length} of $g$ in $T$, given by
		\begin{align*}
			l_T(g) = \inf_{x\in T}\{ d_T(x,x\cdot g) \}
		\end{align*}
	\end{defn}
	
	\begin{rem}
		$l_T(g^h) = l_T(g)$ for all $g,h\in G$. Thus we shall think of $l_T$ as a length function on the conjugacy classes of of single elements of $G$.
		
		It can be shown that this infimum is achieved by some $x\in T$. If $g$ is elliptic, then $l_T(g) = 0$ by definition.
		
		If $g$ is hyperbolic, then $l_T(g) > 0$. The set of points $x\in T$ which realise $l_T(g)$ will form a line in $T$ called the \emph{hyperbolic axis of $g$}. Points on the axis will be translated along the axis a distance of $l_T(g)$ by $g$.
	\end{rem}

	\begin{defn}\cite[p.8]{francavigliamartino2015}\label{def:stretching_factors}
		Let $T,S\in \mathcal{O}(G,\mathcal{G})$. We define the \emph{right stretching factor} from $T$ to $S$ as
		\begin{align*}
			\Lambda_R(T,S) := \sup_{g\in \text{Hyp}(\mathcal{G})}\dfrac{l_S(g)}{l_T(g)}
		\end{align*}
	\end{defn}
	
	\begin{defn}
		Let $\a\in \text{Out}(G,\mathcal{G})$. Then we define the \emph{displacement} of $\a$ to be
		\begin{align*}
			\l_\a :=\inf_{T\in\mathcal{O}}\Lambda_R(T,\a T)
		\end{align*}
	\end{defn}

	\subsection*{- Growth rate}\hspace{1pt}
	
	Dicks \& Ventura \cite{Dicks1993} attribute the following definition to Thurston.
	
	\oldgrowthrate
	
	\begin{rem}
		It can be shown that any two finite generating sets of a finitely generated group $G$ give Lipschitz equivalent conjugacy length functions, and from this it can be further shown that Definition \ref{defn:og_growth_rate} does not depend on the choice of $E$.
	\end{rem}
	
	We shall now generalise this definition from finitely generated groups to finite free products - that is to say, free products of the form $G = G_1 * \ldots * G_k * F_r$, where $k$ and $r$ are finite, and $F_r$ denotes a free group of rank $r$.

	\subsection*{- Relative length functions} \hspace{1pt} \\

	\relgenset
	
	Let $\hat{G} :=  \bigsqcup_{i= 1}^k (G_i \backslash  \{1\})$. Then we observe that $G$ is generated by the set $E\cup \hat{G}$.

	\begin{defn}
		Let $E$ be a relative generating set of $G$ with respect to $\mathcal{G}$. To each element $g\in G$, we assign its \emph{relative length with respect to $\mathcal{G}$}, $|g|_{E\cup\hat{G}}$, to be the length of a shortest word in the alphabet $E\cup \hat{G}$ which represents $g$ in $G$.
		
		To each element $g\in G$, we assign the \emph{relative conjugacy length with respect to $\mathcal{G}$}, $l_E(g)$, given by 
		\begin{align*}
			l_E(g) = \min\{|h|_{E\cup\hat{G}} \mid h\in G \text{ is conjugate to } g\}
		\end{align*}
	\end{defn}
	
	It is clear that this length depends upon the choice of $E$. However, when $E$ is finite, the effect this choice has on the length is bounded in the following sense:
	
	\begin{defn}
		We say that two length functions $l_1, l_2$ on the same set $X$ are \emph{Lipschitz equivalent} if there exist constants $C,D>0$ such that, for all $x \in X$, $C l_2(x)\leq l_1(x) \leq D l_2(x)$. We write $l_1\sim l_2$.
		
		Let $G = G_1 * \ldots * G_k * F_r$. We say that two length functions $l_1, l_2$ on the conjugacy classes of $G$ are \emph{relatively Lipschitz equivalent} if they are Lipschitz equivalent when restricted to $\text{Hyp}(\mathcal{G})$. We write $l_1\sim_{\mathcal{G}} l_2$.
	\end{defn}
	
	\begin{prop}\label{prop:lip_equiv_word_lengths}(Adapted from \cite[p.13]{osin2006})
		Let $G = G_1*\ldots*G_k*F_r$ be a free product. Suppose that $E_1$ and $E_2$ are two finite relative generating sets of $G$ with respect to $\{G_1,\ldots,G_k\}$. Then the corresponding length functions $l_{E_1}$ and $l_{E_2}$ are Lipschitz equivalent (and hence relatively Lipschitz equivalent).
	\end{prop}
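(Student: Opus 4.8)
The plan is to imitate the standard comparison of word metrics, adapted to the relative alphabets $E_1 \cup \hat{G}$ and $E_2 \cup \hat{G}$; this is essentially Osin's argument specialised to the case of trivial edge groups. The key observation is that the ``relative part'' $\hat{G} = \bigsqcup_{i=1}^{k}(G_i \setminus \{1\})$ is common to both alphabets: only the finite pieces $E_1$ and $E_2$ differ. So the comparison will proceed by rewriting each $E_1$-letter as a fixed word over $E_2 \cup \hat{G}$ and vice versa, while leaving the $\hat{G}$-letters untouched.

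First I would fix the comparison constants. Since $E_2 \cup \hat{G}$ generates $G$ and $E_1$ is finite, for each $e \in E_1$ choose once and for all a word over $E_2 \cup \hat{G}$ representing $e$ (and use its formal inverse to represent $e^{-1}$), and set
\[
C_1 := \max\Big( 1,\ \max_{e \in E_1} |e|_{E_2 \cup \hat{G}} \Big) < \infty,
\]
and symmetrically $C_2 := \max\big(1,\ \max_{e \in E_2} |e|_{E_1 \cup \hat{G}}\big) < \infty$. Now take $g \in G$ and a shortest word $w = s_1 \cdots s_n$ over $E_1 \cup \hat{G}$ with $w = g$ in $G$ and $n = |g|_{E_1 \cup \hat{G}}$. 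Replace each letter: if $s_j \in \hat{G}$ leave it in place (it is already a letter over $E_2 \cup \hat{G}$), and if $s_j \in E_1^{\pm 1}$ substitute the chosen word of length at most $C_1$. The result still represents $g$, is a word over $E_2 \cup \hat{G}$, and has length at most $C_1 n$ (here the normalisation $C_1 \geq 1$ is what lets the unchanged $\hat{G}$-letters be absorbed into the bound). Hence $|g|_{E_2 \cup \hat{G}} \leq C_1\, |g|_{E_1 \cup \hat{G}}$, and by the symmetric argument $|g|_{E_1 \cup \hat{G}} \leq C_2\, |g|_{E_2 \cup \hat{G}}$.

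Finally I would pass to conjugacy lengths. For every $g \in G$,
\[
l_{E_2}(g) = \min_{h \sim g} |h|_{E_2 \cup \hat{G}} \leq \min_{h \sim g} C_1\, |h|_{E_1 \cup \hat{G}} = C_1\, l_{E_1}(g),
\]
and symmetrically $l_{E_1}(g) \leq C_2\, l_{E_2}(g)$, so $\tfrac{1}{C_2}\, l_{E_1}(g) \leq l_{E_2}(g) \leq C_1\, l_{E_1}(g)$ for all $g$; thus $l_{E_1} \sim l_{E_2}$, which a fortiori gives $l_{E_1} \sim_{\mathcal{G}} l_{E_2}$. I do not expect a genuine obstacle here: the only care needed is bookkeeping — fixing representative words for generators and their inverses, and keeping the $C_i \geq 1$ normalisation so that the unsubstituted $\hat{G}$-letters are handled — and the finiteness of $E_1$ and $E_2$ is precisely what makes $C_1$ and $C_2$ finite.
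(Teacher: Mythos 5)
Your proof is correct. The paper itself gives no proof of this proposition, only the citation to Osin, and your substitution argument --- rewriting each finite $E_1$-letter by a fixed word over $E_2 \cup \hat{G}$ while leaving $\hat{G}$-letters untouched, normalising $C_i \geq 1$ so the unchanged letters are absorbed into the bound, and then passing to conjugacy lengths by minimising over the conjugacy class --- is exactly the standard comparison of relative word lengths that the citation has in mind.
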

	
	\begin{thm}
		Let $\a\in \text{Out}(G,\mathcal{G})$, let $E$ be a finite relative generating set for $G$, and let $T\in \mathcal{O}$. Then $l_E \sim_\mathcal{G} l_T$.
	\end{thm}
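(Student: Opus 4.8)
The plan is to reduce to one explicitly chosen relative generating set by means of Proposition \ref{prop:lip_equiv_word_lengths}, and then to compare both length functions to the combinatorial length of circuits in the quotient graph of groups of $T$.

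First I would pass to the quotient graph of groups $\Gamma = T/G$: a finite metric graph of groups with trivial edge groups and $\pi_1(\Gamma)\cong G$. Since $T$ is cocompact, $\Gamma$ has finitely many edges, so $0 < m := \min_e d(e) \le M := \max_e d(e) < \infty$; fix a spanning tree $Y$ of the underlying graph, write $\Delta$ for its diameter, and fix for each vertex $v$ the geodesic $p_v$ in $Y$ from the base vertex to $v$. Take $E_0 = \{\,t_e : e \notin Y\,\}$, the usual stable letters $t_e = p_{\iota e}\, e\, p_{\tau e}^{-1}$, transported to $G$ via the marking; these together with $\hat G$ generate $G$, and (after relabelling the free factors, which as in the discussion of free factor systems changes the decomposition only by conjugacy and hence $l_E$ only by bounded multiplicative constants, cf. the proof of Proposition \ref{prop:lip_equiv_word_lengths}) we may take the decomposition defining $\hat G$ to be the one $\Gamma$ supplies. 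I would then recall the standard dictionary: conjugacy classes of $\mathcal{G}$-hyperbolic elements of $G$ correspond to cyclically reduced circuits in $\Gamma$, and if $c = a_1 e_1 \cdots a_n e_n$ is the circuit of $[g]$ then $l_T(g) = \sum_{i=1}^n d(e_i)$; writing $n(g) \ge 1$ for its number of edges one gets $m\,n(g) \le l_T(g) \le M\,n(g)$, so it suffices to show $n \asymp l_{E_0}$ on $\text{Hyp}(\mathcal{G})$.

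For $l_{E_0}(g) \le \tfrac{2}{m} l_T(g)$: the circuit $c$ represents a conjugate $g'$ of $g$, and conjugating it into the base vertex and collapsing the stretches lying inside $Y$ (a reduced closed path in a tree is trivial) expresses $g'$ as an alternating product $\prod_{i=1}^n (p_{\iota e_i} a_i p_{\iota e_i}^{-1})\,[\,t_{e_i}\ \text{or}\ 1\,]$, in which each $p_{\iota e_i} a_i p_{\iota e_i}^{-1}$ lies in a free factor (hence is a single letter of $\hat G$ or is trivial) and each $t_{e_i}$ is a single letter of $E_0$; thus $l_{E_0}(g) \le |g'|_{E_0\cup\hat G} \le 2n(g) \le \tfrac{2}{m} l_T(g)$. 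For the reverse inequality, with $K := 2\Delta + 1$: take a conjugate $h$ of $g$ with $|h|_{E_0\cup\hat G} = l_{E_0}(g) =: L$ and write it as a product of $L$ letters; applying the marking, each letter becomes a reduced loop of combinatorial length $\le K$ (a stable letter $t_e$, or a loop $p_v a p_v^{-1}$), so the image of $h$ is a closed path of combinatorial length $\le KL$, and its cyclic reduction — which is the circuit of $[g]$ — has at most $KL$ edges, whence $n(g) \le K\, l_{E_0}(g)$ and $l_T(g) \le M\,n(g) \le MK\, l_{E_0}(g)$. Combining the two estimates gives $l_{E_0} \sim_{\mathcal{G}} l_T$, and then Proposition \ref{prop:lip_equiv_word_lengths} together with transitivity of $\sim_{\mathcal{G}}$ upgrades this to $l_E \sim_{\mathcal{G}} l_T$ for every finite relative generating set $E$.

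I expect the main obstacle to be bookkeeping rather than any genuine difficulty. One must (i) pin down the identification between the fixed free product decomposition that defines $\hat G$ and the vertex groups of $\Gamma = T/G$, which agree only up to conjugacy, and absorb the discrepancy into the Lipschitz constants; and (ii) state precisely the combinatorial description of $l_T$ as the metric length of the circuit of $[g]$, together with the fact that cyclic reduction of a closed path in a graph of groups with trivial edge groups never increases the number of edges traversed. Neither point is deep, but both must be laid out carefully for the estimates above to be rigorous.
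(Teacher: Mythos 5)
Your proposal is correct, but it takes a genuinely different route from the paper. You work directly with the quotient graph of groups $\Gamma = T/G$, build from it the standard generating set $E_0$ of stable letters, establish a two-sided combinatorial comparison between $|\cdot|_{E_0\cup\hat G}$ and the number of edges $n(g)$ of the cyclically reduced circuit of $[g]$, and then compare $n(g)$ to the metric length $l_T(g)$ using the uniform bounds $m\le d(e)\le M$. The paper instead starts from $E$, passes (via Proposition~\ref{prop:lip_equiv_word_lengths}) to a basis $E'$ of the free part of the decomposition attached to $E$, builds the rose-shaped graph of groups of Figure~\ref{fig:simpleGOG} with petals of length $1$ and spokes of length $1/2$, observes that this choice of metric gives the exact equality $l_{E'}=l_{T'}$ on hyperbolic elements, and then invokes the existence of Lipschitz $\mathcal O$-maps between arbitrary points of $\mathcal O$ (\cite[Lem 4.2]{francavigliamartino2015}) to compare $T'$ with the given $T$. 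The trade-off is clear: your argument is self-contained and never leaves the given tree, at the price of the bookkeeping you already flag — the free product decomposition furnished by $T/G$ agrees with the one attached to $E$ only up to conjugation of the factors, which changes $\hat G$ itself, and Proposition~\ref{prop:lip_equiv_word_lengths} as stated fixes the decomposition and varies only $E$; so the ``absorb into the constants'' step needs its own (short) argument rather than a bare citation. The paper's route avoids this by staying with the decomposition attached to $E$ throughout, but outsources the nontrivial work of comparing two arbitrary trees to an external Lipschitz-map result.
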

	\begin{proof}
		There exists a free product decomposition $G = G_1 * \ldots * G_k * F_r$ corresponding to $E$. Let  $E' = \{x_1, \ldots, x_r\}$ be a basis for $F_r$. Then $E'$ is also a finite $\mathcal{G}$-relative generating set for $G$. Let $\G'$ denote the graph of groups depicted in Figure \ref{fig:simpleGOG}. The universal cover $T'$ of $\G'$ lies in $\mathcal{O}$.
		
		To define a metric on $\G'$ (and hence $T'$), we assign the loops on the left length 1, and the remaining edges on the right length $\frac{1}{2}$. By assigning these edge lengths, we ensure that $l_{E'}(g) = l_{T'}(g)$ for all hyperbolic $g\in G$, hence $l_{E'} \sim_{\mathcal{G}} l_{T'}$
		
		There exist Lipschitz continuous maps between any two $G$-trees in $\mathcal{O}$ \cite[Lem 4.2]{francavigliamartino2015}. This is sufficient to prove that $l_{T'} \sim_{\mathcal{G}} l_{T}$.
		
		Thus
		\begin{align*}
			l_E \stackrel{\text{Prop \ref{prop:lip_equiv_word_lengths}}}{\sim_{\mathcal{G}}} l_{E'} \sim_{\mathcal{G}} l_{T'} \sim_{\mathcal{G}} l_{T}
		\end{align*}
	\end{proof}
	
	\subsection*{- Automorphisms}
	
	\begin{nota}
		The outer automorphism group of a group $G$ is defined as $\text{Out}(G) := \text{Aut}(G)/ \text{Inn}(G)$. Elements of $\text{Out}(G)$ are equivalence classes of automorphisms, where two automorphisms are equivalent if they differ by an inner automorphism. However, when we write $\a \in \text{Out}(G)$ we mean that $\a$ is an automorphism in $\text{Aut}(G)$ representing an equivalence class in $\text{Out}(G)$.
		
		In this paper, the automophisms of $G$ will act on $G$ on the right.
	\end{nota}
	
	\begin{defn}
		Let $\mathcal{G}$ be a free factor system for a group $G$, and let $\a \in \text{Out}(G)$. We say that $\mathcal{G}$ is $\a$-invariant if $\mathcal{G} = \mathcal{G} \a$. We write $\text{Out}(G,\mathcal{G})$ to denote the subgroup of $\text{Out}(G)$ consisting of the elements $\a$ such that $\mathcal{G}$ is $\a$-invariant.
	\end{defn}
	
	\subsection*{- The relative growth rate}
	
	\begin{defn}
		Let $\mathcal{G}$ be a free factor system for a group $G$. We say a \emph{$\mathcal{G}$-bounded} length function is a a length function $l$ on the conjugacy classes of single elements of $G$ which satisfies the following:
		\begin{itemize}
			\item $\exists C >0$ such that $\forall g\notin \text{Hyp}(\mathcal{G})$, $l(g) \leq C$
			\item $\exists \e >0$ such that $\forall g\in \text{Hyp}(\mathcal{G})$, $\e \leq l(g)$
			\item $\forall g\in \text{Hyp}(\mathcal{G})$, for all non-zero integers $n$, $l(g^n) = |n|l(g)$
		\end{itemize}
	\end{defn}

	\newgrowthrate
	
	\begin{lem}\label{lem:only_hyperbolic_matters}
		Let $G, \mathcal{G}, \a$ and $l$ be as in Definition \ref{defn:relative_growth_rate}. Then the relative growth rate of $\a$ is determined by the hyperbolic elements of $G$ - that is to say,
		\begin{align*}
			\text{GR}_{\mathcal{G}}(\a, l) = \sup\{\text{GR}(\a,g) \mid g\in \text{Hyp}(\mathcal{G})\}
		\end{align*}
	\end{lem}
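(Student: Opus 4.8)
The plan is to break the supremum defining $\text{GR}_\mathcal{G}(\a,l)$ into the part coming from $\text{Hyp}(\mathcal{G})$ and the part coming from the $\mathcal{G}$-elliptic elements, and to argue that the elliptic part never exceeds the hyperbolic part. The mechanism is that, because $l$ is $\mathcal{G}$-bounded, the sequence $l(g\a^k)$ is trapped in a fixed range independent of $k$: bounded above by some $C$ whenever $g\a^k$ is $\mathcal{G}$-elliptic, and bounded below by some $\e>0$ whenever $g\a^k$ is $\mathcal{G}$-hyperbolic. The elementary fact that $\sqrt[k]{c}\to 1$ for any fixed $c>0$ then pins the corresponding growth rate against $1$.

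Concretely, first I would invoke $\a$-invariance of $\mathcal{G}$: every power $\a^k$ permutes the (conjugates of the) free factors, hence carries $\mathcal{G}$-elliptic elements to $\mathcal{G}$-elliptic elements and $\mathcal{G}$-hyperbolic elements to $\mathcal{G}$-hyperbolic elements. So for $\mathcal{G}$-elliptic $g$ we have $g\a^k\notin\text{Hyp}(\mathcal{G})$ for all $k$, whence $l(g\a^k)\le C$ and $\text{GR}(\a,l,g)=\limsup_{k\to\infty}\sqrt[k]{l(g\a^k)}\le\limsup_{k\to\infty}\sqrt[k]{C}=1$. Dually, fixing any $h\in\text{Hyp}(\mathcal{G})$, all $h\a^k$ are again $\mathcal{G}$-hyperbolic, so $l(h\a^k)\ge\e$ and $\text{GR}(\a,l,h)\ge\limsup_{k\to\infty}\sqrt[k]{\e}=1$; in particular $\sup\{\text{GR}(\a,l,g')\mid g'\in\text{Hyp}(\mathcal{G})\}\ge 1$.

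Putting the two bounds together finishes it. The inequality $\text{GR}_\mathcal{G}(\a,l)\ge\sup\{\text{GR}(\a,l,g')\mid g'\in\text{Hyp}(\mathcal{G})\}$ is automatic, since the left side is a supremum over a larger set. For the reverse inequality it suffices to check $\text{GR}(\a,l,g)\le\sup\{\text{GR}(\a,l,g')\mid g'\in\text{Hyp}(\mathcal{G})\}$ for every $g\in G$; this is trivial when $g$ is hyperbolic, and when $g$ is $\mathcal{G}$-elliptic it follows from $\text{GR}(\a,l,g)\le 1\le\sup\{\text{GR}(\a,l,g')\mid g'\in\text{Hyp}(\mathcal{G})\}$ by the two displays above. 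Hence the two suprema coincide.

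I do not expect a serious obstacle. The only points needing care are the bookkeeping claim that $\a$-invariance of the \emph{family} $\mathcal{G}$ really does preserve the $\mathcal{G}$-elliptic/$\mathcal{G}$-hyperbolic partition of individual elements (and not merely the set of subgroups), and the harmless degenerate case $\mathcal{G}=\{G\}$, where $\text{Hyp}(\mathcal{G})=\emptyset$ and the statement should be read as excluding a trivial free product. Everything else reduces to the observation that a sequence confined to a fixed positive interval has $k$-th root tending to $1$.
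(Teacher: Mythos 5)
Your proof is correct and follows the same overall decomposition as the paper's: bound elliptic elements' growth rate by $1$ using the upper bound $C$ from $\mathcal{G}$-boundedness, then show the hyperbolic side contributes at least $1$. The one genuine (if small) difference is in the hyperbolic step: the paper invokes the multiplicative property $l(h^n)=|n|\,l(h)$ and replaces $h$ by $h^N$ for $N\ge 1/\e$ to force $l(h^N\a^k)\ge 1$, whereas you observe directly that $l(h\a^k)\ge\e$ already yields $\limsup_k \sqrt[k]{l(h\a^k)}\ge\lim_k \sqrt[k]{\e}=1$ for any fixed $\e>0$. Your version is slightly cleaner since it does not use the third axiom of $\mathcal{G}$-boundedness at all, and it also does not require passing to a power of $h$. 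Your remark about the degenerate case $\text{Hyp}(\mathcal{G})=\emptyset$ is a reasonable caveat that the paper leaves implicit.
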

	\begin{proof}
		To prove this, it suffices to show there exists a hyperbolic element of $G$ whose relative growth rate is greater than or equal to the relative growth rate of every elliptic element.
		
		Let $g\in G$ be an elliptic element. $\mathcal{G}$ is $\a$-invariant, hence $g \a^k$ is also elliptic. Furthermore, since $l$ is $\mathcal{G}$-bounded, $\exists C > 0$ such that $l(g)\leq C$.
		\begin{align*}
			\Rightarrow \text{GR}_{\mathcal{G}}(\a, l, g) &= \limsup_{k\to \infty} \sqrt[k]{l(g \a^k)}\\
			&\leq \limsup_{k\to \infty} \sqrt[k]{C} &&\text{(by properties of $\limsup$)}\\
			&= 1
		\end{align*}
		
		Thus we wish to find a hyperbolic element whose relative growth rate is at least 1. Since $l$ is $\mathcal{G}$ bounded, $\exists \e >0$ such that $\forall h\in \text{Hyp}(\mathcal{G})$, $\e \leq l(g)$. Let $h\in \text{Hyp}(\mathcal{G})$, and take $N\geq \frac{1}{\e}$. Then
		\begin{align*}
			l(h^N\a^k) = l((h\a^k)^N) = |N|\cdot l(g\a^k) \geq \frac{1}{\e} \cdot \e = 1
		\end{align*}
		
		$h^N$ is also hyperbolic, hence this completes the proof.
	\end{proof}
	
	Note that we have only defined the growth rate for $\mathcal{G}$-bounded length functions. We can show that both of the length functions we have seen in this paper are $\mathcal{G}$-bounded:
	
	\begin{lem}
		Let $G= G_1 * \ldots * G_k * F_r$ be a free product with corresponding free factor system $\mathcal{G}$. Let $E$ be a relative generating set with respect to $\mathcal{G}$, and let $l_E$ be the corresponding relative conjugacy length. Then $l_E$ is a $\mathcal{G}$-bounded length function.
	\end{lem}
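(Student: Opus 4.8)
The plan is to check the three defining properties of a $\mathcal{G}$-bounded length function directly, and in fact to take both constants $C$ and $\e$ equal to $1$. For the first property, let $g\notin\text{Hyp}(\mathcal{G})$: if $g=1$ then $l_E(g)=0$, and otherwise $g$ is $\mathcal{G}$-elliptic, hence conjugate to some $1\ne g'\in G_i$; since $g'\in G_i\setminus\{1\}\subseteq\hat G$ is a single letter of the alphabet $E\cup\hat G$, conjugacy-invariance of $l_E$ gives $l_E(g)\le|g'|_{E\cup\hat G}=1$. For the second property, a hyperbolic element, together with every one of its conjugates, is nontrivial, and the only element spelled by a word of length $0$ in any alphabet is the identity; hence every conjugate of a hyperbolic $g$ has relative length at least $1$, so $l_E(g)\ge 1$.

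The third property, $l_E(g^{n})=|n|\,l_E(g)$ for $g\in\text{Hyp}(\mathcal G)$ and $n\ne 0$, is where the content lies. The inequality ``$\le$'' is immediate: pick a conjugate $h=g^{x}$ with $|h|_{E\cup\hat G}=l_E(g)$; then $h^{n}=(g^{n})^{x}$ is conjugate to $g^{n}$, and concatenating $|n|$ copies of a shortest spelling of $h$ (or of $h^{-1}$, using that $E\cup\hat G$ is symmetric, so $|h^{-1}|_{E\cup\hat G}=|h|_{E\cup\hat G}$) yields $l_E(g^{n})\le|n|\,l_E(g)$. For ``$\ge$'' I would pass to the free product normal form in $G=G_1*\cdots*G_k*F_r$. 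Replacing $g$ by a conjugate, assume $g$ is cyclically reduced; $\mathcal G$-hyperbolicity then forces either a cyclic normal form $g=y_1\cdots y_m$ with $m\ge 2$ whose end syllables lie in distinct free factors, or $m=1$ with $y_1$ a nontrivial cyclically reduced element of $F_r$. In the first case no amalgamation occurs at the seams of $g^{n}=(y_1\cdots y_m)^{n}$, so $g^{n}$ is again cyclically reduced with exactly $|n|m$ syllables; in the second, $g^{n}=y_1^{n}$ is a cyclically reduced $F_r$-word whose $F_r$-length is $|n|$ times that of $y_1$. One then wants to read off $l_E(g^{n})\ge|n|\,l_E(g)$ from the fact that each $G_i$-syllable costs a letter of $\hat G$ that no shorter spelling of a conjugate can avoid, and that the $F_r$-contribution is bounded below by the corresponding statement for cyclic word length in a free group.

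I expect this last step to be the main obstacle: one must control $l_E$ from below on powers for an \emph{arbitrary} relative generating set $E$, ruling out that $E$ creates shortcuts along the free-group part which let $g^{n}$ be spelled in fewer than $|n|\,l_E(g)$ letters. The cleanest route I can see is to establish the identity first for the distinguished generating set $E'=\{x_1,\dots,x_r\}$ of Figure \ref{fig:simpleGOG}, for which $l_{E'}$ coincides exactly with the translation length $l_{T'}$ on the associated Bass--Serre tree, so that the identity reduces to the standard equality $l_{T'}(g^{n})=|n|\,l_{T'}(g)$ for tree actions; the remaining work is to transfer this to a general $E$, where relative Lipschitz equivalence (Proposition \ref{prop:lip_equiv_word_lengths}) is by itself insufficient and the syllable-counting analysis above has to be pushed through in full.
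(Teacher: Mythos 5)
Your handling of the first two clauses of $\mathcal G$-boundedness, and of the ``$\le$'' half of the third, is correct and in fact matches the paper (the paper also takes $C=\e=1$). Where you stop, however, flagging that the ``$\ge$'' direction of $l_E(g^n)=|n|\,l_E(g)$ ``has to be pushed through in full'' for a general $E$ and worrying about $E$-shortcuts, you have put your finger on a genuine defect rather than a mere technicality: the identity is \emph{false} for an arbitrary finite relative generating set, so no amount of syllable counting will close the gap. Take $G=F_2=\langle a,b\rangle$, $\mathcal G$ trivial (so $\hat G=\emptyset$, every nontrivial element is hyperbolic), $E=\{a,b,abab\}$, and $g=ab$. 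No length-one word over $E$ is conjugate to $ab$, so $l_E(g)=2$; but $g^2=abab$ is itself a generator, so $l_E(g^2)=1\ne 4=2\,l_E(g)$.

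The paper's own proof contains exactly this hole. After cyclically reducing a shortest word $x_1\cdots x_k$ for a conjugate of $g$, it simply asserts $l_E\bigl((x_1\cdots x_k)^n\bigr)=|n|k$, which is the claim your counterexample defeats; cyclic reduction does not prevent higher powers from collapsing onto a single letter of an ad hoc alphabet. (The cyclic-reduction step as written is also slightly short: one must additionally rule out $x_1,x_k$ lying in a common $G_i$, not merely $x_1=x_k^{-1}$.) The statement can be rescued either by restricting $E$ to a free basis of $F_r$ --- precisely your suggested route via the distinguished $E'$ of Figure~\ref{fig:simpleGOG}, where $l_{E'}$ coincides with $l_{T'}$ on hyperbolics and tree translation lengths do satisfy $l_{T'}(g^n)=|n|\,l_{T'}(g)$ --- or by weakening the third clause of ``$\mathcal G$-bounded'' to a two-sided bound $c\,|n|\,l(g)\le l(g^n)\le C\,|n|\,l(g)$, which does hold for any finite $E$ by Proposition~\ref{prop:lip_equiv_word_lengths} together with the exact identity for $l_{T'}$, and which is all that the later growth-rate arguments actually use. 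So: your instinct that relative Lipschitz equivalence alone is insufficient for the exact equality is right, and the honest conclusion is that the exact equality should not be demanded of $l_E$ for general $E$ in the first place.
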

	\begin{proof}
		Let $g\in G$. If $g$ is elliptic, then $l_E(g) = 1$, so take $C = 1$
		
		The only element of $G$ with relative conjugacy length 0 is the identity element, which is elliptic. Thus we may take $\epsilon = 1$.
		
		Let $g$ be hyperbolic, and let $x_1\ldots x_k$ be a shortest word in $E$ representing a conjugate of $g$. If $x_1= x_k^{-1}$, then conjugating by $x_1^{-1}$ would shorten the word, resulting in a contradiction. Thus we may may assume that this is not the case, and hence
		\begin{align*}
			l_E(g^n) = l_E((x_1\ldots x_k)^n) = |n|k = |n|l_E(g)
		\end{align*}
	\end{proof}
	
	\begin{lem}
		Let $T\in \mathcal{O}(G,\mathcal{G})$. Then $l_T$ is a $\mathcal{G}$-bounded length function.
	\end{lem}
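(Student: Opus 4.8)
The plan is to verify, for $l = l_T$, the three conditions defining a $\mathcal{G}$-bounded length function; all are essentially immediate from the geometry of trees in $\mathcal{O}(G,\mathcal{G})$ except the uniform lower bound, which is where cocompactness must enter. The first ingredient I would isolate is that $\varepsilon_0 := \min\{d_T(v,w) : v,w \text{ adjacent vertices of } T\}$ is strictly positive: $T$ is a metric simplicial tree, so every edge has positive length; it is cocompact, so it has only finitely many edge orbits; and $G$ acts by isometries, so edges in a common orbit have equal length. Hence there are only finitely many distinct edge lengths, all positive, and $\varepsilon_0 > 0$.

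For the first condition, suppose $g \notin \text{Hyp}(\mathcal{G})$, so that $g$ lies in some subgroup $H \in \mathcal{G}$. By the very definition of $\mathcal{O}(G,\mathcal{G})$, the elements of $\mathcal{G}$ are exactly the elliptic subgroups of $T$, so $H$ fixes a point of $T$; hence $g$ fixes that point and $l_T(g) = 0$, so any constant $C > 0$ works. For the second condition, let $g \in \text{Hyp}(\mathcal{G})$ and suppose for contradiction that $g$ is elliptic in $T$. Then $g$ fixes a point $p$, hence so does $\langle g \rangle$, so $\langle g \rangle$ is an elliptic subgroup of $T$ and therefore $\langle g \rangle \in \mathcal{G}$, contradicting $g \in \text{Hyp}(\mathcal{G})$. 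Thus $g$ is hyperbolic in $T$, with axis $A_g$. Since $A_g$ is a bi-infinite geodesic in a simplicial tree with discrete vertex set it passes through some vertex $x$; as $G$ acts simplicially, $xg \in A_g$ is again a vertex, with $d_T(x, xg) = l_T(g)$, and the segment $[x, xg] \subseteq A_g$ joins two distinct vertices and therefore contains a full edge. Hence $l_T(g) \geq \varepsilon_0$, and we may take $\varepsilon = \varepsilon_0$.

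The third condition is the standard fact that for $g$ hyperbolic in $T$ and $n \neq 0$ the power $g^n$ has the same axis $A_g$ and translates along it by $|n|\,l_T(g)$, so that $l_T(g^n) = |n| l_T(g)$; this was already recorded in the Remark following the definition of translation length. The only genuine obstacle among the three is the second: one must first argue that $\mathcal{G}$-hyperbolicity forces hyperbolicity in $T$ (using that the elliptic subgroups of $T$ are precisely $\mathcal{G}$), and then, rather than merely noting the pointwise inequality $l_T(g) > 0$, extract the uniform bound $\varepsilon_0$ from cocompactness together with the simplicial structure of $T$.
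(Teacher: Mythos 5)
Your proof is correct and follows essentially the same three steps as the paper's: elliptic elements have translation length zero, the shortest edge length (positive by cocompactness and the fact that isometries preserve edge lengths) bounds $l_T$ below on hyperbolic elements, and powers of a hyperbolic element translate along the same axis by proportional amounts. You usefully make explicit a step the paper glosses over, namely that $\mathcal{G}$-hyperbolicity forces genuine hyperbolicity in $T$ because by definition the elliptic subgroups of any $T \in \mathcal{O}(G,\mathcal{G})$ are exactly $\mathcal{G}$.
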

	\begin{proof}
		Elliptic elements are by definition elements which fix a point in $T$. Thus the translation length of elliptic elements is bounded above.
		
		Hyperbolic elements $g$ translate points along a hyperbolic axis - in particular, since the action is isometric, the points are translated by a whole number of edges. $T$ is cocompact, hence it contains finitely many edge orbits - thus we may take $\epsilon$ to be the length of the shortest edge.
		
		Additionally, points on the axis of $g$ are translated along it by a distance of $l_T(g)$. $g^n$ has the same hyperbolic axis, and it translates points on the axis a distance of $l_T(g^n) = |n|l_T(g)$.
	\end{proof}
	
	We will now show that these two length functions give us the same relative growth rate:

	\begin{prop}\label{prop:use_other_length_functs}
		Let $l_1, l_2$ be $\mathcal{G}$-bounded length functions, and suppose that $l_1 \sim_\mathcal{G} l_2$. Then $\text{GR}_\mathcal{G} (\a, l_1) = \text{GR}_\mathcal{G} (\a, l_2) $.
	\end{prop}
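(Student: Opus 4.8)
The plan is to push everything down to $\mathcal{G}$-hyperbolic elements using Lemma~\ref{lem:only_hyperbolic_matters}, and then to exploit the fact that a fixed pair of multiplicative constants disappears after extracting $k$-th roots. The point is that relative Lipschitz equivalence only controls $l_1$ and $l_2$ on $\text{Hyp}(\mathcal{G})$, but that is exactly where the growth rate lives.

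First I would fix a $\mathcal{G}$-hyperbolic element $g\in G$. Since $\mathcal{G}$ is $\a$-invariant (and $\mathcal{G}$ is closed under conjugation, being the system of subgroups of conjugates of the free factors), $g\a^k$ is $\mathcal{G}$-hyperbolic for every $k$, just as in the proof of Lemma~\ref{lem:only_hyperbolic_matters}. So the hypothesis $l_1\sim_\mathcal{G} l_2$ supplies constants $C,D>0$, \emph{independent of $k$}, with
\[
C\, l_2(g\a^k)\ \le\ l_1(g\a^k)\ \le\ D\, l_2(g\a^k)\qquad\text{for all }k.
\]
Because $l_2$ is $\mathcal{G}$-bounded, $l_2(g\a^k)\ge\e>0$, so none of these quantities is zero. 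Taking $k$-th roots gives $\sqrt[k]{C}\,\sqrt[k]{l_2(g\a^k)}\le\sqrt[k]{l_1(g\a^k)}\le\sqrt[k]{D}\,\sqrt[k]{l_2(g\a^k)}$, and since $\sqrt[k]{C}\to1$ and $\sqrt[k]{D}\to1$, applying $\limsup_{k\to\infty}$ throughout yields $\text{GR}_{\mathcal{G}}(\a,l_1,g)=\text{GR}_{\mathcal{G}}(\a,l_2,g)$. Taking the supremum over all $g\in\text{Hyp}(\mathcal{G})$ and invoking Lemma~\ref{lem:only_hyperbolic_matters} for both $l_1$ and $l_2$ then gives $\text{GR}_{\mathcal{G}}(\a,l_1)=\text{GR}_{\mathcal{G}}(\a,l_2)$, as required.

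I do not expect a genuine obstacle here; the only points that need a little care are (i) recording that the Lipschitz constants $C,D$ are uniform in $k$ — this is immediate, since relative Lipschitz equivalence provides a single pair of constants valid on all of $\text{Hyp}(\mathcal{G})$, and the iterates $g\a^k$ all lie there — and (ii) justifying the interchange of $\limsup$ with the sandwich inequality, which is the standard fact that multiplying a sequence by a factor tending to $1$ does not change its $\limsup$. The $\mathcal{G}$-boundedness hypothesis is what guarantees the sequences are bounded away from $0$, so these manipulations are legitimate.
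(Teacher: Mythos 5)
Your proof is correct and follows essentially the same route as the paper's: restrict to $\text{Hyp}(\mathcal{G})$ via Lemma~\ref{lem:only_hyperbolic_matters}, apply the uniform Lipschitz constants to $l_i(g\a^k)$, extract $k$-th roots, and note the constants wash out in the $\limsup$. If anything you are slightly more careful than the paper in explicitly recording that the iterates $g\a^k$ remain $\mathcal{G}$-hyperbolic, which is what licenses applying the relative Lipschitz bound to them.
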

	\begin{proof}
		By Lemma \ref{lem:only_hyperbolic_matters}, it is sufficient to restrict our attention to the hyperbolic elements of $G$. Let $h\in \text{Hyp}(\mathcal{G})$. $l_1\sim_\mathcal{G} l_2$, hence $\exists D>0$ such that $l_1(h)\leq D l_2(h)$. Thus
		\begin{align*}
			\text{GR}(\a, l_1, h) &= \limsup_{n\to \infty} \sqrt[n]{l_1(h \a^n )}\\
			&\leq \limsup_{n\to \infty} \sqrt[n]{D} \sqrt[n]{l_2(h \a^n )}\\
			&\leq \Big(\limsup_{n\to \infty} \sqrt[n]{D}\Big) \Big(\limsup_{n\to \infty} \sqrt[n]{l_2(h \a^n )}\Big) &&\text{(By properties of limsup)}\\
			&= \limsup_{n\to \infty} \sqrt[n]{l_2(h \a^n )} && \text{($\limsup_{n\to \infty}\sqrt[n]{D} = \lim_{n\to \infty}\sqrt[n]{D} = 1$)}\\
			&= \text{GR}(\a, l_2, h)
		\end{align*}
		
		The reverse inequality can be obtained in the analogous way, and hence $\text{GR}(\a, l_1, h) = \text{GR}(\a, l_2, h)$. This holds for all $h\in \text{Hyp}(\mathcal{G})$, hence $\text{GR}(\a, l_1) = \text{GR}(\a, l_2)$
	\end{proof}
	
	\begin{nota}
		In a similar manner to the free group case, Proposition \ref{prop:use_other_length_functs} shows that, up to relative Lipschitz equivalence, the relative growth rate does not depend on our choice of $l$. Thus, unless the particular length function is required, we shall omit $l$ from the notation, and simply write $\text{GR}_{\mathcal{G}}(\a, g)$ and $\text{GR}_{\mathcal{G}}(\a)$.
	\end{nota}

	\section{Relative Train Tracks and Perron-Frobenius}
	
	\begin{defn}
		Let $T,S\in \mathcal{O}$. A map $f:T\to S$ is called an \emph{$\mathcal{O}$-map} if it is Lipschitz continuous and $G$-equivariant. We write $\text{Lip}(f)$ to denote the Lipschitz constant of $f$.
		
		An $\mathcal{O}$-map $f:T\to S$ is \emph{straight} if it has constant speed on edges - that is to say, the restriction of $f$ to each edge is a linear map.
	\end{defn}
	
	\begin{rem}\cite[Remark 3.4]{francavigliamartino2018a}
		$\mathcal{O}$-maps exist between any pair of $G$-trees in $\mathcal{O}$. Any $\mathcal{O}$-map $f$ can be uniquely straightened - that is to say, there exists a unique $\mathcal{O}$-map $\text{Str}(f)$ which is homotopic relative to vertices to $f$. We have $\text{Lip}(\text{Str}(f))\leq \text{Lip}(f)$.
	\end{rem}
	
	\begin{defn}
		Let $\a\in \text{Out}(G,\mathcal{G})$, and let $T\in \mathcal{O}(G,\mathcal{G})$. We call a straight $\mathcal{O}$-map $f:T\to \a T$ a \emph{topological representative for $\a$}. If $f$ maps vertices to vertices, we say it is \emph{simplicial}.
	\end{defn}
	
	\subsection*{- Stratification of $G$-trees for topological representatives}
	
	Let $f:T\to \a T$ be a simplicial topological representative for $\a\in \text{Out}(G,\mathcal{G})$. Being simplicial, $f$ will map edges in $T$ to edge paths. This behaviour determines the associated \emph{transition matrix} $M = (m_{ij})$ of $f$, where $m_{ij}$ is the number of times the $f$-image of the $j$-th edge-orbit crosses the $i$-th edge-orbit.
	
	Relabelling the edges (which equates to reordering the rows and columns of the matrix) allows us to write $M$ in block upper triangular form
	
	$M =
	\left( {\begin{array}{cccc}
			M_1 & ? & ? & ?\\
			0 & M_2 & ? & ?\\
			\vdots & \vdots & \ddots & \vdots\\
			0 & 0 & \cdots & M_n\\
	\end{array} } \right)$
	
	where the matrices $M_1,\ldots M_n$ are either zero matrices or irreducible matrices.
	
	Writing $M$ in this form determines a partition of the edges of $T$: The \emph{$r$th stratum} $H_r$ of $T$ is the closure of the union of the edge orbits in $T$ corresponding to the rows/columns in $M_r$. We observe that $M_r$ is the transition matrix of $H_r$.
	
	\begin{rem}
		Dividing $T$ into strata in this way also gives us a filtration $\emptyset = T_0\subset \ldots \subset T_n = T$ of $T$, where $T_r = \bigcup_{i\leq r} H_r$.
		
		Examining $M$ tells us that this filtration is $f$-invariant. The strata themselves are not; the $f$-image of an edge in one stratum may intersect the lower strata.
	\end{rem}
	
	\begin{thm}[Perron-Frobenius]
		Let $A$ be a non-negative, irreducible, integer-valued square matrix. Then one of its eigenvalues, called the \emph{PF-eigenvalue}, is a positive real number $\mu$ whose absolute value is greater than or equal to that of all other eigenvalues. There is a positive real eigenvector corresponding to $\mu$.
	\end{thm}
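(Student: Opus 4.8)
The statement is the classical Perron--Frobenius theorem; the plan is to deduce it from Brouwer's fixed point theorem after first reducing from the merely non-negative, irreducible case to the case of a \emph{strictly positive} matrix. Write $A$ for the given $n\times n$ matrix and set $B:=(I+A)^{n-1}$. Since $A$ is irreducible, the directed graph on $\{1,\dots,n\}$ with an arc $i\to j$ whenever $A_{ij}\neq 0$ is strongly connected, so between any two vertices there is a directed path of length at most $n-1$; expanding $(I+A)^{n-1}=\sum_{k=0}^{n-1}\binom{n-1}{k}A^{k}$ then shows every entry of $B$ is strictly positive. Note also that $A$ commutes with $B$, that $Av=\mu v$ implies $Bv=(1+\mu)^{n-1}v$, and conversely that (via the commuting relation, once we know the relevant eigenspace of $B$ is a line) eigenvectors of $B$ for its Perron root will be eigenvectors of $A$.

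Next I would establish Perron--Frobenius for the positive matrix $B$. Let $\Delta=\{x\in\R^{n}:x_i\ge 0,\ \sum_i x_i=1\}$ be the standard simplex. For $x\in\Delta$ the vector $Bx$ has all coordinates strictly positive, so $\|Bx\|_{1}>0$ and $x\mapsto Bx/\|Bx\|_{1}$ is a continuous self-map of the compact convex set $\Delta$; by Brouwer it has a fixed point $v$, giving $Bv=\rho v$ with $\rho=\|Bv\|_{1}>0$, and since $Bv$ is a positive vector, $v$ is strictly positive. Applying the same argument to $B^{T}$ produces a strictly positive $u$ with $B^{T}u=\sigma u$; then $\sigma\, u^{T}v=u^{T}Bv=\rho\, u^{T}v$ and $u^{T}v>0$ force $\sigma=\rho$, so $u$ is a positive left $\rho$-eigenvector of $B$. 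I would then show the $\rho$-eigenspace of $B$ is one-dimensional: if $Bw=\rho w$ with $w$ real, set $t=\min_i (w_i/v_i)$; the vector $w-tv$ is non-negative with a zero coordinate, yet if it were nonzero then $B(w-tv)=\rho(w-tv)$ would be strictly positive, a contradiction, so $w=tv$ (the complex case follows by taking real and imaginary parts, so the $\rho$-eigenspace over $\C$ is the line $\C v$).

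Now I would transfer back to $A$. Since $A$ commutes with $B$ it preserves the line $\R v$, so $Av=\mu v$ for some real $\mu$; as $A\ge 0$ and $v>0$ we get $Av\ge 0$, hence $\mu\ge 0$, and $(1+\mu)^{n-1}=\rho$. The same reasoning applied to $A^{T}$ and $u$ gives a positive left eigenvector $u^{T}A=\mu' u^{T}$ with $\mu'\ge 0$ and $(1+\mu')^{n-1}=\rho$, whence $\mu'=\mu$. Finally, for any eigenvalue $\lambda$ of $A$ with eigenvector $w\neq 0$, taking absolute values coordinatewise in $\lambda w=Aw$ and using $A\ge 0$ gives $|\lambda|\,|w|\le A|w|$; multiplying on the left by the strictly positive row vector $u^{T}$ yields $|\lambda|\,u^{T}|w|\le u^{T}A|w|=\mu\, u^{T}|w|$, and since $u^{T}|w|>0$ we conclude $|\lambda|\le\mu$. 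Thus $\mu$ is a positive real eigenvalue dominating all others, with strictly positive eigenvector $v$, as required.

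The step I expect to be the main obstacle is precisely this reduction and the bookkeeping around it: the topological fixed-point argument runs cleanly only for a \emph{positive} matrix, so one is forced through $B=(I+A)^{n-1}$, and the crux is verifying that the Perron root of $B$ is simple and then using commutativity to pull the eigenvector back to $A$ --- a direct Brouwer argument applied to $A$ itself only yields a non-negative eigenvector and gives no control over the remaining eigenvalues. (The integrality hypothesis plays no role beyond guaranteeing genuine non-negative real entries; alternatively one could bypass Brouwer entirely and use the Collatz--Wielandt characterization $\mu=\max_{x\ge 0,\,x\neq 0}\min_{i:\,x_i>0}(Ax)_i/x_i$, but the topological route is cleanest for our purposes.)
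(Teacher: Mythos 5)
The paper states this Perron--Frobenius theorem as classical background and gives no proof of its own, so there is no in-paper argument to compare against. Your proof --- passing to the strictly positive matrix $B=(I+A)^{n-1}$, applying Brouwer's fixed point theorem on the simplex to get a positive eigenpair, using the same argument on $B^{T}$ and the pairing $u^{T}Bv$ to match the left and right eigenvalues, establishing simplicity of the Perron eigenspace of $B$ via the $\min_i(w_i/v_i)$ trick, and finally transferring back to $A$ by commutativity and bounding $|\lambda|$ with the strictly positive left eigenvector --- is a correct and standard route, and your remark that integrality is irrelevant is also right. The one gap worth closing: you derive only $\mu\ge 0$ from $Av\ge 0$ and $v>0$, but then assert $\mu$ is positive. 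Irreducibility supplies this: for $n\ge 2$ every row of $A$ is nonzero (each vertex of the strongly connected digraph has an outgoing arc), so $(Av)_i=\sum_j A_{ij}v_j>0$ for every $i$, forcing $\mu>0$; for $n=1$ the usual convention excludes the $1\times 1$ zero matrix from being irreducible. With that observation added the argument is complete.
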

	
	\begin{defn}
		Let $f$ and $M$ be as above. We shall write $\mu_r$ to denote the PF-eigenvalue of $M_r$. We say that $H_r$ is a \emph{growing stratum} if $M_r$ is not a zero matrix.
	\end{defn}
	
	Observe that, since $f$ is non-trivial, there will always exist at least one growing stratum.
	
	\subsection*{- Relative train tracks}
	
	Relative train track maps are a particular type of simplicial topological representative which were introduced by Bestvina \& Handel in the case of free groups \cite{Bestvina1992}, and were generalised to free products by Collins \& Turner \cite{Collins1994}. Collins and Turner defined their maps on graphs of complexes - graphs with 2-complexes assigned to the vertices. However, this definition can be transferred to $G$-trees by replacing each 2-complex with its fundamental group to give a graph of groups and then lifting to the Bass-Serre tree.

	\begin{defn}
		Let $T\in \mathcal{O}$, and let $v$ be a vertex in $T$. A \emph{turn} at $v$ is a pair of directed edges $(e_1,e_2)$ such that $\tau(e_1) = v = \iota(e_2)$. We say the turn is \emph{degenerate} if $e_2 = \overline{e_1}$.
		
		A simplicial topological representative $f:T\to \a T$ induces a map $Df$ on the turns of $T$: $Df(e_1,e_2)$ is the turn consisting of the first edges in the edge paths $f(e_1)$, $f(e_2)$. A turn is \emph{illegal} with respect to $f$ if its image under some iterate of $Df$ is degenerate. Otherwise, it is \emph{legal}.
		
		We say an edge path $\gamma$ in $T$ is \emph{legal} if it does not contain any illegal turns.
		
		We say an edge path $\gamma$ in $T_r$ is \emph{$r$-legal} if $\gamma\cap H_r$ does not contain any illegal turns.
	\end{defn}
	
	\RTTmapdefn

	\RTTmapsexist

	Let $f:T \to \a T$ be a relative train track map. Then, by definition, $f$ is a simplicial topological representative for $\a$, so we may stratify $T$ as described above. Let $M$ be the transition matrix for $f$, with submatrices $M_1,\ldots, M_n$. Let $\emptyset = T_0\subset \ldots \subset T_n = T$ be the corresponding filtration, and let $H_1,\ldots, H_n$ denote the strata.
	
	For each $r$ we shall write $\mu_r$ to denote the Perron-Frobenius eigenvalue for $M_r$. Let $\mathbf{v}_r$ be the corresponding positive real \emph{row} eigenvector. This eigenvector determines \emph{$r$-lengths} $L_r(e)$ which we can assign to the edges of $H_r$: we declare the $r$-length of the $i$th edge of $H_r$ to b the $i$th entry in $\mathbf{v}_r$.
	
	If $\gamma$ is an edge path, then we define its $r$-length to be
	\begin{align*}
		L_r(\gamma) = \sum_{e\in\gamma\cap H_r} L_r(e)
	\end{align*}
	
	\begin{nota}
		We will adapt this notation slightly when considering elements of the group $G$:
		
		Recall that a hyperbolic element $g\in \text{Hyp}(\mathcal{G})$ corresponds to a hyperbolic axis in $T$, and that $l_T(g)$ is the distance points on the axis are translated by $g$. However, we can also think of $l_T(g)$ as the length of a path - specifically the length of a fundamental domian of the axis. We can always choose this fundamental domain such that it consists of whole edges (i.e. it is an edge path)
		
		Let $\gamma_g$ denote such a  fundamental domain. We say that $g$ is $r$-legal if $\gamma_g$ is $r$-legal. This is independant of our choice of $\gamma_g$. We will write $L_r(g)$ to denote the $r$-length of $\gamma_g$.
	\end{nota}
	
	Now, let us give the properties we shall require.
	
	\begin{lem}\label{lem:propertiesofRTTs}
		Let $f:T\to \a T$ be a relative train track map on some $T \in \mathcal{O}(G,\mathcal{G})$, and let $H_r$ be a growing stratum in $T$. Then the following hold:
		\begin{itemize}
			\item[(i)] Every edge in $H_r$ is $r$-legal
			\item[(ii)] If an edge path $\gamma$ is $r$-legal, then $f(\gamma)$ is $r$-legal.
			\item[(iii)] There exists an $r$-legal group element $g\in \text{Hyp}(\mathcal{G})$.
			\item[(iv)] If an edge path $\gamma$ is $r$-legal, then $L_r(f(\gamma)) = \mu_r L_r(\gamma)$.
			\item[(v)] If $g$ is $r$-legal, then $L_r(g \a^k) = \mu_r^k L_r(g)$.
		\end{itemize}
	\end{lem}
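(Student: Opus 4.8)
The plan is to prove the five items in the order (i), (ii), (iv), (iii), (v). Items (i) and (ii) are quick. A single edge, viewed as an edge path, contains no turns, so it is vacuously $r$-legal; this gives (i). For (ii), the filtration $\emptyset = T_0 \subset \cdots \subset T_n = T$ is $f$-invariant, so $[f(\gamma)] \subseteq T_r$ whenever $\gamma \subseteq T_r$, while the statement that $[f(\gamma)]$ contains no illegal turn inside $H_r$ is exactly condition~(3) of Definition~\ref{defn:RTTmap}.

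The engine of the lemma is (iv), and this is where I expect the real work to be. Write $\gamma = e_1 \cdots e_m$ as a reduced edge path (and recall we may assume each edge-image $f(e)$ is itself a reduced path). An edge $e_i \in T_{r-1}$ has $f(e_i) \subseteq T_{r-1}$ by $f$-invariance of $T_{r-1}$, so it contributes no $H_r$-edges; and for $e_i \in H_r$ the $H_r$-edges occurring in $f(e_i)$ are counted by the $e_i$-column of $M_r$, so since $\mathbf v_r$ is a positive left eigenvector, $L_r(f(e_i)) = (\mathbf v_r M_r)_{e_i} = \mu_r (\mathbf v_r)_{e_i} = \mu_r L_r(e_i)$. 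Hence everything comes down to the claim that no $H_r$-edge cancels when $f(e_1) \cdots f(e_m)$ is tightened to $[f(\gamma)]$ — this is the step I expect to be the main obstacle. Since the reverse of an $H_r$-edge is an $H_r$-edge, it suffices to analyse the junctions of the concatenation: condition~(1) says each $f(e_i)$ with $e_i \in H_r$ begins and ends with an $H_r$-edge; at a junction between two such $f(e_i), f(e_{i+1})$ the meeting edges form $Df$ of the turn $(\overline{e_i}, e_{i+1})$ of $\gamma$, which is legal (as $\gamma$ is $r$-legal), hence non-degenerate; at a junction meeting a $T_{r-1}$-edge the adjacent edges lie in different strata and cannot cancel; and a maximal $T_{r-1}$-subpath of $\gamma$ sandwiched between $H_r$-edges joins two points of $H_r \cap T_{r-1}$, so condition~(2) keeps its image non-trivial, leaving a $T_{r-1}$-edge to shield the flanking $H_r$-edges from each other. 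Thus every $H_r$-edge survives and $L_r([f(\gamma)]) = \sum_{e_i \in H_r} \mu_r L_r(e_i) = \mu_r L_r(\gamma)$.

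For (iii), fix an edge $e \in H_r$. By (i) and (ii) every iterate $f^k(e)$ is $r$-legal, and by (iv) $L_r(f^k(e)) = \mu_r^k L_r(e) > 0$. When $\mu_r > 1$ these lengths are unbounded, so — since $T$ has only finitely many $H_r$-edge orbits — for large $k$ some orbit is crossed twice along $f^k(e)$ with the same orientation; the subpath $\rho$ running between the two crossings is $r$-legal (a subpath of an $r$-legal path) and closes up, under the element $g$ identifying its two endpoints, to a line that is the axis of a hyperbolic $g \in \text{Hyp}(\mathcal{G})$, the closing turns being turns that already occur along the reduced path $f^k(e)$ (hence non-degenerate, and legal if they lie in $H_r$). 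So $g$ is $r$-legal. The residual case $\mu_r = 1$ forces $M_r$ to be a permutation matrix, so condition~(1) forces $f$ to permute the edges of the single edge-orbit making up $H_r$, and a short direct argument again yields an $r$-legal hyperbolic element.

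Finally (v). Let $g$ be $r$-legal with $r$-legal fundamental domain $\gamma_g$ in its axis $A_g$ in $T$, from $p$ to $p \cdot g$, and for $n \geq 1$ let $\Gamma_n \subseteq A_g$ be the edge path from $p$ to $p \cdot g^n$; it is $r$-legal with $L_r(\Gamma_n) = n L_r(g)$. By equivariance $[f(\Gamma_n)]$ is the reduced path in $\a T$ from $f(p)$ to $f(p) \cdot g^n$, which agrees with a length-$n$ fundamental domain of the axis of $g$ in $\a T$ up to two bridge segments of bounded $r$-length; so (iv) gives $\mu_r n L_r(g) = L_r([f(\Gamma_n)]) = n L_r^{\a T}(g) + O(1)$, and dividing by $n$ and letting $n \to \infty$ yields $L_r^{\a T}(g) = \mu_r L_r(g)$, i.e. $L_r(g\a) = \mu_r L_r(g)$ under the usual identification of $\a T$ with $T$ carrying the $g$-action to the $g\a$-action. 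Since each turn of the axis of $g$ in $\a T$ occurs along some $[f(\Gamma_n)]$, which is $r$-legal by (ii), the element $g\a$ is $r$-legal too, and iterating this identity $k$ times gives $L_r(g\a^k) = \mu_r^k L_r(g)$.
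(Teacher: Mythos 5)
Your proof is correct and takes the same route as the paper, but you work out details that the paper delegates to citations or elides; the comparison is worth recording.

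For (i) and (ii) you and the paper agree verbatim. For (iv) the paper simply cites \cite[p.454]{Collins1994}; your no-cancellation argument — decomposing the junctions of $f(e_1)\cdots f(e_m)$ into $H_r$-$H_r$ junctions (non-degenerate because $\gamma$ is $r$-legal and the junction turn is $Df(\overline{e_i},e_{i+1})$), $H_r$-$T_{r-1}$ junctions (cannot cancel across strata), and maximal $T_{r-1}$-runs (kept non-trivial by condition (2)) — is the standard proof of that cited fact and is worth spelling out; just note you also use that $f(e_i)$ is itself reduced, which is automatic for a straight map. Your identification $L_r(f(e_i)) = (\mathbf v_r M_r)_{e_i} = \mu_r(\mathbf v_r)_{e_i}$ is correct since $\mathbf v_r$ is the left eigenvector as in the paper. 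For (v) you are more careful than the paper: you account for the $O(1)$ bridge segments between $[f(\Gamma_n)]$ and a length-$n$ fundamental domain in $\alpha T$, and — importantly — you justify that $g\alpha$ is again $r$-legal, which the paper's one-line invocation of ``property (ii) allows us to iterate'' glosses over.

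The most interesting divergence is in (iii). The paper's argument says iterating $f$ gives ``longer and longer $r$-legal paths,'' which requires $\mu_r>1$. But the paper's definition of growing stratum only asks that $M_r\neq 0$, and an irreducible non-negative integer matrix with PF-eigenvalue $1$ (i.e.\ a permutation matrix) qualifies. You correctly flag this residual case. It does have a direct resolution, though your sketch of it is slightly off: $M_r$ being a permutation matrix together with condition (1) forces $f$ to send each $H_r$-edge to a single $H_r$-edge (not necessarily a single orbit), and since $f$ is equivariant and the action is free on edges and without inversions, $Df$ never degenerates an $H_r$-turn, so \emph{every} $H_r$-turn is legal; then any hyperbolic element whose axis crosses an $H_r$-edge (which exists by minimality of $T$) is $r$-legal. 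For $\mu_r>1$ your pigeonhole on orientations is cleaner than the paper's ``crosses some edge orbit at least three times,'' and your observation that the closing turn of the loop $\rho$ is a turn already occurring in $f^k(e)$ — hence non-degenerate and, if in $H_r$, legal — is exactly the point the paper leaves implicit. One small thing worth verifying explicitly in a write-up: that the closing element $g$ is hyperbolic; this follows because an elliptic isometry fixing a point would equate $d(q,\tau e_1')$ and $d(q,\tau e_2')$, which differ by $2|e_1'|$ around the midpoint $q$ of $[\iota e_1',\iota e_2']$.
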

	\begin{proof}\hspace{1pt}
		\begin{itemize}
			\item[(i)] Edges do not contain any turns - in particular, they do not contain any illegal turns. Thus they are $r$-legal.
			\item[(ii)] This is one of the defining properties of a relative train track map (Definition~\ref{defn:RTTmap} (iii)).
			\item[(iii)] By (i), edges in $H_r$ are $r$-legal. Therefore, by (ii), iterating $f$ will give us longer and longer $r$-legal paths. Since $T$ is cocompact, it contains finitely many edge orbits. Thus we will eventually reach an $r$-legal path $f^k(e)$ which crosses some edge orbit at least three times. It follows that $f^k(e)$ must contain $\gamma_g$ for some hyperbolic element $g$.
			\item[(iv)] Follows from the definition of a relative train track map \cite[p454]{Collins1994}.
			\item[(v)] Since $f$ is a topological representative for $\a$, we have that $f^k(g) = g \a^k$ for all $k \geq 1$. Thus, by property (iv), $L_r(g \a) = \mu_r L_r(g)$, and property (ii) allows us to iterate $f$, giving us $L_r(g \a^k) = \mu_r^k L_r(g)$.
		\end{itemize}
	\end{proof}

	\section{Main Theorem}
	
	\mainThm
	
	\begin{proof}
		The existence of $f$ is assured by Theorem \ref{thm:RTTmapsexist}. We observe that multiple strata in $T$ may have PF-eigenvalue equal to $\mu_R$. Therefore, when we write $H_R$, we are referring to the highest of these strata - that is to say, we are maximizing the size of $T_R$. $H_R$ will always be a growing stratum.
		
		We prove this theorem by proving the following three inequalities:
		\begin{itemize}		
			\item[\textbf{A:}] $\mu_R \leq \text{GR}_\mathcal{G}(\a)$
			
			\item[\textbf{B:}] $\text{GR}_\mathcal{G}(\a) \leq \lambda_\a$
			
			\item[\textbf{C:}] $\lambda_\a \leq \mu_R$
		\end{itemize}
		
		\begin{itemize}
			\item[\textbf{A:}] By Lemma \ref{lem:propertiesofRTTs} (v), there exists an $R$-legal $g \in \text{Hyp}(\mathcal{G})$ such that $L_R(g \a^k) = \mu_R^k L_R(g)$ for all $k\geq 1$. Hence
			\begin{align*}
				\Rightarrow \text{GR}_\mathcal{G}(\a,g) &= \limsup_{k\to \infty}\sqrt[k]{l_T(g \a^k)}\\
				&\geq \limsup_{k\to \infty}\sqrt[k]{L_R(g \a^k)}\\
				&= \limsup_{k\to \infty}\sqrt[k]{\mu_R^k L_R(g)} &&\text{(by Lemma \ref{lem:propertiesofRTTs} (v))}\\
				&= \mu_R
			\end{align*}

			\item[\textbf{B:}] Let $S\in \mathcal{O}$, and for brevity of notation let $\Lambda$ denote the right stretching factor $\Lambda_R(S,\a S)$. Then
			\begin{align*}
				\Lambda_R(S,\a^k S) &\leq \Lambda_R(S,\a S)\Lambda_R(\a S,\a^2 S)\ldots \Lambda_R(\a^{k-1} S,\a^k S)\\
				&= \Lambda_R(S,\a S)^k &&\text{(by triangle inequality)}\\
				&= \Lambda^k\\
				\Rightarrow \frac{l_S(g \a^k)}{l_S(g)} &\leq \Lambda^k \hspace{35pt} \forall g\in G &&\text{(by definition of $\Lambda_R$)}\\
				\Rightarrow l_S(g \a^k) &\leq \Lambda^k l_S(g) \hspace{35pt} \forall g\in G
			\end{align*}
			\begin{align*}
				\Rightarrow &\text{GR}_\mathcal{G}(\a, g) = \limsup_{k\to \infty}\sqrt[k]{l_S(g \a^k)}
				\leq \limsup_{k\to \infty}\sqrt[k]{\Lambda^k l_S(g)}
				= \Lambda \hspace{35pt} \forall g\in G\\
				\Rightarrow &\text{GR}_\mathcal{G}(\a) \leq \lambda_S
			\end{align*}
			This holds for all $S\in \mathcal{O}$. Thus $\text{GR}_\mathcal{G}(\a)\leq \inf_{S\in\mathcal{O}} \Lambda(S,\a S) = \lambda_\a$.

			\item[\textbf{C:}]
			
			Recall the definition of the displacement: $\lambda_\a := \inf_{S\in\mathcal{O}}\Lambda_R(S,\a S)$. Ideally we would prove inequality $\textbf{C}$ by finding a $G$-tree in $\mathcal{O}$ whose right stretching factor is exactly $\mu_R$. However, unless $\a$ is irreducible, this is not always possible. Thus we shall instead find a \emph{sequence} of $G$-trees whose right stretching factors \emph{tend} towards $\mu_R$.
			
			Our proof will apply \cite[Lemma 4.3]{francavigliamartino2015}, which states that the Lipschitz constant of an $\mathcal{O}$-map is equal to the right stretching factor between its endpoints. Recall that relative train tracks are straight $\mathcal{O}$-maps - that is to say, they have constant speed on edges. Since we only have finitely many edge orbits in $T$, it follows that $\text{Lip}(f)$ is realised by some edge - i.e. $\text{Lip}(f) = \max_{e\in T} \frac{l_{T}(f(e))}{l_{T}(e)}$.
			
			Let $H_r$ be a stratum in $T$. Recall that the lengths of the edges in $T$ were determined by an eigenvector corresponding to $\mu_r$. Since eigenvectors are only determined up to scalar multiplication, we are free to rescale the edge lengths in $H_r$ by a positive constant without affecting any of the relevant properties.
			
			For a positive number $N>0$, let $T_N\in \mathcal{O}$ denote the $G$-tree acquired from $G$ by rescaling the stratum $H_r$ in $T$ by $N^r$ for every $r$. $f$ induces a relative train track map on $T_N$, which we shall denote by $f_N$.
			
			Since we are now working with multiple trees, we should make some adjustments to our $r$-length notation for this portion of the proof. $L_r$ shall be used to denote the $r$-lengths of the original tree $T$. We introduce the notation $L_{r,N}$ for the $r$-lengths of the tree $T_N$.
			
			Let $e$ be an edge in the $r$th stratum of $T_N$. Then $l_{T_N}(e) = L_{r,N}(e) = N^r L_r(e)$.
			\begin{align*}
				\Rightarrow l_{T_N}(f_N(e)) &= \sum_{i=1}^{r} L_{i,N}(f_N(e)) = \sum_{i=1}^{r} N^i L_{i}(f_N(e))\\
				\Rightarrow \frac{l_{T_N}(f(e))}{l_{T_N}(e)} &= \frac{\sum_{i=1}^r N^i L_{i} (f(e))}{N^r L_r(e)}\\
				&= \frac{\sum_{i=1}^r N^{i-r} L_i (f(e))}{ L_r(e)}\\
				&\too \frac{L_r(f(e))}{L_r(e)} \text{ as } N\to \infty\\
			\end{align*}
			By Lemma \ref{lem:propertiesofRTTs} (i) \& (iv), $\displaystyle\frac{L_r(f(e))}{L_r(e)} = \frac{\mu_r L_r(e)}{L_r(e)} = \mu_r$
			\begin{align*}
				\Rightarrow \lim_{N\to \infty}(\text{Lip}(f_N)) &= \lim_{N\to \infty}\bigg(\max_{e\in T_N} \frac{l_{T_N}(f(e))}{l_{T_N}(e)}\bigg) = \mu_R
			\end{align*}
			Finally,
			\begin{align*}
				\lambda_\a = \inf_{S\in\mathcal{O}}\Lambda_R(S,\a S) \leq \lim_{N\to \infty} \Lambda_R(T_N,\a T_N) \stackrel{\text{by \cite[Lem 4.3]{francavigliamartino2015}}}{\leq} \lim_{N\to \infty}(\text{Lip}(f_N)) = \mu_R
			\end{align*}
			
		\end{itemize}
		
	\end{proof}

	\section{Appendix 1: A Bounding Function}
	
	Let $\a \in \text{Out}(G,\mathcal{G})$. As in the previous chapter, we consider a relative train track map $f:T\to \a T$ on a $G$-tree $T\in \mathcal{O}$. The purpose of this appendix is to find a function of $k$ which acts as an upper bound on $l_T(g \a^k)$. If $g$ is elliptic, then $l_T(g \a^k) = 0$, so we shall restrict our attention to hyperbolic elements.
	
	As before, $f$ gives a stratification of $T$. We begin by considering the effect $\a$ has on a single stratum:
	
	\begin{lem}\label{lem:k_equals_1}
		Let $g\in \text{Hyp}(\mathcal{G})$. Then, for all $r$,
		\begin{align*}
			L_r(g\a) \leq \sum_{i\geq r}A(r,i) L_i(g) \text{, where } A(r,i) = \max_{e\in H_i}\frac{L_r(f(e))}{l(e)}
		\end{align*}
	\end{lem}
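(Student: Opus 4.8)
The plan is to directly estimate the $r$-length of $g\a = f(\gamma_g)$ by decomposing the edge path $\gamma_g$ according to which stratum each edge lies in, and then using the linearity of $f$ on edges together with the upper-triangular structure of the transition matrix. First I would fix a fundamental domain $\gamma_g$ of the hyperbolic axis of $g$ that is an edge path, and recall that since $f$ is a (straight, simplicial) topological representative we have $f^k(g) = g\a^k$, so in particular $f(\gamma_g)$ represents $g\a$ and $L_r(g\a) = L_r(f(\gamma_g))$ (this is independent of the choice of $\gamma_g$).

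Next I would write $L_r(f(\gamma_g)) = \sum_{e \in \gamma_g} L_r(f(e))$, where the sum is over the edges $e$ crossed by $\gamma_g$ (counted with multiplicity), using the fact that the $r$-length of a concatenation is the sum of the $r$-lengths. Each such edge $e$ lies in exactly one stratum $H_i$, so I would group the sum: $L_r(f(\gamma_g)) = \sum_{i} \sum_{e \in \gamma_g \cap H_i} L_r(f(e))$. For a fixed $i$, every term $L_r(f(e))$ with $e \in H_i$ satisfies $L_r(f(e)) \leq A(r,i)\, l(e)$ by the definition of $A(r,i)$ as the maximum of $L_r(f(e))/l(e)$ over $e \in H_i$. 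Summing over $e \in \gamma_g \cap H_i$ gives $\sum_{e \in \gamma_g \cap H_i} L_r(f(e)) \leq A(r,i) \sum_{e \in \gamma_g \cap H_i} l(e) = A(r,i)\, L_i(g)$, where the last equality is just the definition of the $i$-length $L_i(g)$ of $g$ (the $i$-length only sees edges in $H_i$, and $l(e) = L_i(e)$ for $e\in H_i$). Finally I would observe that the block upper-triangular form of $M$ forces $L_r(f(e)) = 0$ whenever $e$ lies in a stratum $H_i$ with $i < r$, since the $f$-image of an edge in a lower stratum never crosses edges of a higher stratum; hence only terms with $i \geq r$ contribute, yielding $L_r(g\a) \leq \sum_{i\geq r} A(r,i) L_i(g)$.

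The only genuinely delicate point is the vanishing $L_r(f(e)) = 0$ for $e\in H_i$, $i<r$, which is where the choice of ordering of strata (block upper-triangular $M$, equivalently the $f$-invariant filtration $T_{r-1}$) enters; everything else is bookkeeping about additivity of $r$-length along edge paths and the definition of $A(r,i)$. I would also note in passing that $A(r,i)$ is finite because $T$ is cocompact (finitely many edge orbits, hence finitely many values of $L_r(f(e))/l(e)$ to maximise over), and that the estimate is vacuously consistent when $L_i(g)=0$ for all relevant $i$. No nontrivial use of the relative train track axioms is needed here — this lemma is purely about the stratified structure — so it serves as the $k=1$ base case for the inductive bounding argument to follow.
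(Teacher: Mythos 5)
Your proof is correct and takes essentially the same route as the paper: decompose $\gamma_g$ by strata, use the $f$-invariant filtration (equivalently the block upper-triangular transition matrix) to discard the contributions from $i<r$, and bound the remaining terms by $A(r,i)L_i(g)$. The only cosmetic difference is that the paper restricts the sum to $i\geq r$ at the outset, whereas you sum over all $i$ and then observe that the $i<r$ terms vanish.
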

	\begin{proof}
		Recall the notation we introduced earlier in the paper: When we write $L_r(g \a)$, we actually mean $L_r(f(\gamma_g))$, where $\gamma_g$ is an edge path serving as a fundamental domain of the axis of $g$.
		
		The filtration $\emptyset = T_0\subset \ldots \subset T_n = T$ determined by $f$ is $f$-invariant - therefore a point on the path $f(\gamma_g)$ can only lie in $H_r$ if it was mapped from $H_r$ itself or from a higher stratum. Thus we shall split $\gamma_g$ into pieces $\gamma_g\cap H_i$ for $i\geq r$ and consider the effect $f$ has on each piece.
		
		(Note that $\gamma_g\cap H_i$ may not be connected. When we write $L_r(\gamma_g\cap H_i)$, we mean the sum of the $r$-lengths of its component paths.)
		\begin{align*}
			L_r(g\a) = L_r(f(\gamma_g)) = \sum_{i\geq r} L_r(f(\gamma_g\cap H_i))
		\end{align*}
		Let $i\geq r$. Then
		\begin{align*}
			L_R(f(\gamma_g\cap H_i)) &= \sum_{\text{edge orbits }e\in H_i} n_e L_R(f(e)) &&\text{where $n_e = $ num. times $\gamma_g$ crosses $e$}\\
			&= \sum_{\text{edge orbits }e\in H_i} n_e \frac{L_R(f(e))}{l(e)}l(e)\\
			&\leq A(r,i) \sum_{\text{edge orbits }e\in H_i} n_e l_T(e)\\
			&= A(r,i) L_i(\gamma_g)\\
			&= A(r,i) L_i(g)
		\end{align*}
		Thus
		\begin{align*}
			L_r(g\a) \leq \sum_{i\geq r}A(r,i) L_i(g)
		\end{align*}
	\end{proof}
	
	We now iterate and consider the effect $\a^k$ has on a single stratum.
	
	\begin{lem}\label{lem:full_induction}
		Let $g\in G$. Then for all $k\geq 1$,
		\begin{align*}
			L_r(g\a^k) &\leq \sum_{(i_1,\ldots,i_k)\in I_k[r,m]}A(r,i_1) A(i_1, i_2) \ldots A(i_{k-1}, i_k) l_{i_k}(g)\\
			\text{where } I_k[r,m] &= \{(i_1,\ldots,i_k)\in \Z_k \mid r\leq i_1\leq \ldots \leq i_m \leq m\}\\
			\text{and } A(i,j) &= \max_{e\in H_j}\frac{L_i(f(e))}{l(e)}
		\end{align*}
	\end{lem}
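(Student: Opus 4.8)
The plan is to prove Lemma~\ref{lem:full_induction} by induction on $k$, using Lemma~\ref{lem:k_equals_1} both as the base case and as the engine of the inductive step. The statement is really just the $k$-fold unravelling of the single-step bound $L_r(g\a)\leq\sum_{i\geq r}A(r,i)L_i(g)$, so the whole argument is bookkeeping with the index sets $I_k[r,m]$.

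For the base case $k=1$, note that $I_1[r,m]=\{(i_1)\mid r\leq i_1\leq m\}$, which is just $\{r,r+1,\ldots,m\}$, and the claimed bound
\[
L_r(g\a)\leq\sum_{i_1=r}^{m}A(r,i_1)\,l_{i_1}(g)
\]
is precisely Lemma~\ref{lem:k_equals_1} (with $l_{i_1}(g)=L_{i_1}(g)$ in the notation there). So nothing new is needed here.

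For the inductive step, assume the bound holds for $k$. Write $g\a^{k+1}=(g\a^k)\a$ and apply Lemma~\ref{lem:k_equals_1} to the element $h:=g\a^k$ --- more precisely, apply it along the stratification so that $L_r(h\a)\leq\sum_{j\geq r}A(r,j)L_j(h)$. Then substitute the inductive bound for each $L_j(g\a^k)=L_j(h)$:
\[
L_r(g\a^{k+1})\leq\sum_{j\geq r}A(r,j)\sum_{(i_1,\ldots,i_k)\in I_k[j,m]}A(j,i_1)A(i_1,i_2)\cdots A(i_{k-1},i_k)\,l_{i_k}(g).
\]
Now reindex: set the outer summation variable $j$ to be the new first coordinate of a $(k+1)$-tuple. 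A tuple $(j,i_1,\ldots,i_k)$ with $r\leq j$ and $(i_1,\ldots,i_k)\in I_k[j,m]$ is exactly a tuple with $r\leq j\leq i_1\leq\cdots\leq i_k\leq m$, i.e. an element of $I_{k+1}[r,m]$. Relabelling $(j,i_1,\ldots,i_k)$ as $(i_1,\ldots,i_{k+1})$ turns the double sum into the single sum over $I_{k+1}[r,m]$ of $A(r,i_1)A(i_1,i_2)\cdots A(i_k,i_{k+1})\,l_{i_{k+1}}(g)$, which is the claim for $k+1$.

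The only genuine subtlety --- and the step I would be most careful about --- is the reindexing of the nested sums and the verification that the monotonicity conditions $r\leq i_1\leq\cdots\leq i_k$ in $I_k[j,m]$ combine correctly with $r\leq j$ to give exactly $I_{k+1}[r,m]$ with no tuples lost or double-counted; here one uses that the strata are ordered so that $f$ respects the filtration (a point in $H_r$ can only come from $H_i$ with $i\geq r$), which is exactly why Lemma~\ref{lem:k_equals_1} only sums over $i\geq r$ in the first place. There is also a minor notational point that the statement uses a mix of $L_i(g)$ and $l_{i_k}(g)$; I would just note at the outset that for a hyperbolic $g$ these denote the same $r$-length of a fundamental domain $\gamma_g$, so the induction can be carried out in consistent notation.
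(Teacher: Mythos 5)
Your proof is correct and follows essentially the same route as the paper: base case from Lemma~\ref{lem:k_equals_1}, inductive step by peeling off one application of $\a$ (applying Lemma~\ref{lem:k_equals_1} to $h=g\a^k$), substituting the inductive bound, and then merging the nested sums over $\{j\geq r\}$ and $I_k[j,m]$ into a single sum over $I_{k+1}[r,m]$ by relabelling $(j,i_1,\ldots,i_k)$ as $(i_1,\ldots,i_{k+1})$. Your closing observations — that the chain $r\leq j\leq i_1\leq\cdots\leq i_k\leq m$ is exactly the defining condition of $I_{k+1}[r,m]$, and that the statement's $l_{i_k}(g)$ and $L_{i_k}(g)$ refer to the same quantity — are the right points to flag, and the paper's proof implicitly relies on both as well.
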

	\begin{proof}
		We prove this by induction. The case of $k=1$ is given by Lemma \ref{lem:k_equals_1}.
		
		Assume that the statement holds for $k=n$. Then
		\begin{align*}
			L_r(g\a^{n+1}) &\leq \sum_{j\geq r}A(r,j) L_j(g \a^n) &&\text{(by Lemma \ref{lem:k_equals_1})}\\
			&\leq \sum_{j\geq r}A(r,j) \Bigg(\sum_{(i_1,\ldots,i_n)\in I[j,m,n]}A(j,i_1) A(i_1, i_2) \ldots A(i_{n-1}, i_n) L_{i_n}(g) \Bigg) &&\text{(by inductive hypothesis)}\\
			& = \sum_{j\geq r} \Bigg(\sum_{(i_1,\ldots,i_n)\in I[j,m,n]}A(r,j) A(j,i_1) A(i_1, i_2) \ldots A(i_{n-1}, i_n) L_{i_n}(g) \Bigg)\\
			& = \sum_{(j, i_1,\ldots,i_n)\in I[r,m,n+1]}A(r,j) A(j,i_1) A(i_1, i_2) \ldots A(i_{n-1}, i_{n}) L_{i_n}(g)\\
			& = \sum_{(i_1,\ldots,i_{n+1})\in I[r,m,n+1]}A(r,i_1) A(i_1, i_2) \ldots A(i_{n}, i_{n+1}) L_{i_{n+1}}(g) &&\text{(after relabelling indices)}
		\end{align*}
		Thus the statement holds for all $k\geq 1$.
		
	\end{proof}
	
	\begin{thm}\label{thm:polynomial_bound}
		Let $\a\in\text{Out}(G,\mathcal{G})$. Let $f:T\to \a T$ be a relative train track map on some $T\in\mathcal{O}(G,\mathcal{G})$. Let $\mu_R$ be the largest Perron-Frobenius eigenvalue of a stratum in $T$, and let $m$ be the total number of strata in $T$. Then there exists a polynomial $P(k)$ of degree at most $m-1$ such that
		\begin{center}
			$l_T(g \a^k) \leq P(k)\mu_R^k l_T(g)$
		\end{center}
		for all $g\in G$.
	\end{thm}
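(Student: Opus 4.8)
The plan is to obtain the bound directly from Lemma~\ref{lem:full_induction} by reorganising the sum appearing there and controlling the coefficients $A(i,j)$, using the relative train track hypothesis only to pin down the diagonal coefficients $A(r,r)$.

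\emph{Step 1: the diagonal coefficients.} I would first evaluate $A(r,r)$ for each stratum $H_r$. If $H_r$ is a growing stratum, then by Lemma~\ref{lem:propertiesofRTTs}(i) every edge $e\in H_r$ is $r$-legal, so Lemma~\ref{lem:propertiesofRTTs}(iv) gives $L_r(f(e)) = \mu_r L_r(e)$; since the length of an edge of $H_r$ in $T$ is by construction its $r$-length, $l(e) = L_r(e)$, and hence $A(r,r) = \mu_r \le \mu_R$. If $H_r$ is a zero stratum, then $M_r = 0$ means $f(e)$ traverses no edge of $H_r$ for any $e\in H_r$, so $L_r(f(e)) = 0$ and $A(r,r) = 0$. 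I would also record that the Perron--Frobenius eigenvalue of a non-negative irreducible integer matrix is at least $1$, so $\mu_R \ge 1$, and set $C := \max\!\big(1,\ \max_{i<j}A(i,j)\big)$, a finite constant depending only on $f$ since $T$ has finitely many strata.

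\emph{Step 2: bounding a single term.} Fix $r$ and a weakly increasing tuple $r\le i_1\le\cdots\le i_k\le m$, and put $i_0:=r$. The corresponding summand in Lemma~\ref{lem:full_induction} is $\big(\prod_{t=1}^{k}A(i_{t-1},i_t)\big)\,L_{i_k}(g)$. Call $t$ a \emph{flat step} if $i_{t-1}=i_t$ and an \emph{up step} otherwise; since the indices rise from $r$ to at most $m$, there are at most $m-1$ up steps. If some flat step occurs at a zero stratum the summand is $0$ by Step~1; otherwise each flat-step factor equals some $\mu_s\le\mu_R$ and each up-step factor is at most $C$, so the summand is at most $C^{m-1}\mu_R^{\,k-u}L_{i_k}(g)$ where $u\le m-1$ is the number of up steps. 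Using $\mu_R\ge 1$ and the trivial bound $L_{i_k}(g)\le\sum_s L_s(g)=l_T(g)$, every summand is at most $C^{m-1}\mu_R^{k}\,l_T(g)$.

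\emph{Step 3: counting and summing.} The number of weakly increasing tuples of length $k$ with entries in $\{r,\ldots,m\}$ is at most $\binom{k+m-1}{m-1}$, which is a polynomial in $k$ of degree $m-1$. Summing the bound from Step~2 over all such tuples and over the $m$ strata, Lemma~\ref{lem:full_induction} gives
\[
 l_T(g\a^{k}) \;=\; \sum_{r=1}^{m}L_r(g\a^{k}) \;\le\; m\,C^{m-1}\binom{k+m-1}{m-1}\,\mu_R^{k}\,l_T(g),
\]
so one may take $P(k):=m\,C^{m-1}\binom{k+m-1}{m-1}$, of degree $m-1$; for elliptic $g$ both sides vanish, so the inequality holds trivially there. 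The only genuinely substantive point is Step~1: correctly identifying $A(r,r)$ is exactly where the relative train track structure (Lemma~\ref{lem:propertiesofRTTs}) enters, and one must also verify $\mu_R\ge 1$ so that discarding the up-step factors of $\mu_R$ only weakens the bound; everything else is the elementary combinatorial estimate above.
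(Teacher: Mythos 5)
Your argument follows the same strategy as the paper's: apply Lemma~\ref{lem:full_induction}, observe that the index set $I_k[r,m]$ has polynomially many elements (of degree at most $m-1$ in $k$), and bound each coefficient $A(r,i_1)\cdots A(i_{k-1},i_k)$ by separating the diagonal factors (each $\leq\mu_R$) from the strictly increasing ones. Where you part ways with the paper is in how you control the off-diagonal factors, and your version is the more careful one. The paper asserts $A(r,i_1)\cdots A(i_{k-1},i_k)\leq\mathcal{A}\mu_R^k$ with $\mathcal{A}=\prod_{i\neq j}A(i,j)$; but this $\mathcal{A}$ is zero as literally written (since $A(i,j)=0$ whenever $i>j$, the $f$-image of an edge in $H_j$ lying in $T_j$), and even after restricting to $i<j$ a sub-product is not in general dominated by the full product unless every $A(i,j)\geq 1$. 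Your $C:=\max\bigl(1,\max_{i<j}A(i,j)\bigr)$, combined with the observation that a weakly increasing chain from $r$ to $m$ has at most $m-1$ strict increases, gives a correct uniform bound $C^{m-1}$ on the off-diagonal part, and your explicit invocation of $\mu_R\geq 1$ to discard the $\mu_R^{-u}$ from the flat-step count is exactly the tacit assumption the paper also needs. The remaining steps --- identifying $A(r,r)$ as $\mu_r$ or $0$ via Lemma~\ref{lem:propertiesofRTTs}, the binomial count $\binom{k+m-1}{m-1}$, the bound $L_{i_k}(g)\leq l_T(g)$, and dispatching elliptic $g$ --- match the paper, so this is in effect the paper's proof with the coefficient estimate repaired.
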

	
	\begin{proof}
		It follows from Lemma \ref{lem:full_induction} that for all $k$,
		\begin{align*}
			l_T(g\a^k) &= \sum_{r=1}^m L_R(g\a^k)\\
			&\leq \sum_{r=1}^m \Bigg(  \sum_{(i_1,\ldots,i_{k+1})\in I_{k+1}[r,m]}A(r,i_1) A(i_1, i_2) \ldots A(i_{k}, i_{k+1}) L_{i_{k+1}}(g) \Bigg)
		\end{align*}
		Thus we have an upper bound for $l_T(g\a^k)$. All that remains is to simplify it.
		
		First we will find the size of the set $I_k[r,m]$, which will tell us how quickly the number of terms in our sum grows as $k\to \infty$. We can think of $I_k[r,m]$ as a set of combinations with repetition, where we choose $k$ elements from the set $\{r,\ldots,m\}$ (which has size $m-r+1$). Therefore
		\begin{align*}
			| I_k[r,m] | &= C(m-r+1, k)\\
			&= \frac{(k+m-r)!}{k!(m-r)!}\\
			&= \frac{(k+m-r)(k+m-r-1)\ldots (k+1)}{(m-r)!}
		\end{align*}
		This is a polynomial of degree $m-r$ in $k$.
		
		Secondly, consider the coefficients of the sum: $A(r,i_1) A(i_1, i_2) \ldots A(i_{k-1}, i_k) l_{i_k}(g)$. If $i=j$, then $A(i,j) = \mu_i\leq \mu_R$; if $i\neq j$, then $A(i,j)$ can appear in each coefficient at most once. It follows that
		\begin{align*}
			A(r,i_1) A(i_1, i_2) \ldots A(i_{k-1}, i_k) \leq \mathcal{A}\mu_R^k
		\end{align*}
		where
		\begin{align*}
			\mathcal{A} = \prod_{i\neq j} A(i,j) 
		\end{align*}
		
		Thirdly, we observe that $l_{T,i}(g)\leq l_T(g)$ for all $i$. Thus, combining all of the above:
		\begin{align*}
			l_T(g\a^k) &\leq \sum_{r=1}^m \Bigg( \sum_{(i_1,\ldots,i_k)\in I_k[r,m]}A(r,i_1) A(i_1, i_2) \ldots A(i_{k-1}, i_k) l_{T,i_k}(g) \Bigg)\\
			&\leq \sum_{r=1}^m \Big(| I_k[r,m] | \mathcal{A}\mu_R^k l_T(g) \Big)\\
			&=P(k) \mu_R^k l_T(g) \hspace{35pt}
		\end{align*}
		where $P(k) = \mathcal{A} \sum_{r=1}^m | I_k[r,m] |$ is a polynomial of degree at most $m-1$.
		
	\end{proof}


	
	
	

	\section{Appendix 2: Irreducible Automorphisms}\label{sec:irreduciblecase}
	
	In the case when $\a\in \text{Out}(G,\mathcal{G})$ is irreducible, the proof that the relative growth rate and displacement are equal is simpler. This is because we can guarantee the existence of train track maps, not just relative train track maps. Furthermore, we can guarantee the existence of a point in $\mathcal{O}$ which realises the displacement, not just a sequence which tends towards it.

	\begin{defn}
		Let $\alpha\in \text{Out}(G,\mathcal{G})$, $T\in \mathcal{O}(G,\mathcal{G})$. We say that a topological representative $f:T\to \alpha T$ is a \emph{train track map} if every edge $e\in T$ is legal with respect to $f$.
	\end{defn}

	When $\a$ is an irreducible automorphism, the transition matrix of any topological representative $f:T\to \alpha T$ will be an irreducible matrix - thus there is only one stratum of $T$ (the whole tree), and only one PF-eigenvalue $\mu$. As with the more general case, the positive row eigenvector corresponding to $\mu$ decides the metric on $T$. This ensures that the length of every edge is scaled exactly by $\mu$, so $\forall e\in T$, $l(f(e)) = \mu l(e)$.

	\begin{lem}\label{lem:TTproperty}
		Let $f:T\to \a T$ be a train track map representing an irreducible automorphism $\a$. Take the metric on $T$ to be the one determined by the PF-eigenvector. Then $\exists g \in \text{Hyp}(\mathcal{G})$ such that $l_T(g\a^k) = \mu^k l_T(g)$ for all $k>0$.
	\end{lem}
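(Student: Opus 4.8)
The plan is to construct the desired hyperbolic element $g$ by exploiting the irreducibility of $\a$, so that there is a single stratum consisting of all of $T$, and then to verify that the edge-scaling property $l(f(e)) = \mu l(e)$ propagates to translation lengths along the axis of a well-chosen $g$. First I would invoke the fact that, since $\a$ is irreducible, the transition matrix of $f$ is itself irreducible; hence $H_1 = T$ is the only (growing) stratum, every edge is $1$-legal by Lemma~\ref{lem:propertiesofRTTs}(i), and the $1$-length $L_1$ coincides with the metric length $l_T$ once the metric is taken to be the one determined by the PF-eigenvector $\mathbf{v}_1$ (this is exactly the normalisation stated in the paragraph preceding the lemma). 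In this setting, ``$r$-legal'' simply means ``legal'', and ``$1$-legal group element'' means a hyperbolic element whose axis is crossed by a legal fundamental domain.

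Next I would apply Lemma~\ref{lem:propertiesofRTTs}(iii) with $r = 1$ to obtain a legal hyperbolic element $g \in \text{Hyp}(\mathcal{G})$, i.e. one whose fundamental domain $\gamma_g$ is a legal edge path. The key point is then Lemma~\ref{lem:propertiesofRTTs}(v): since $g$ is $1$-legal, $L_1(g\a^k) = \mu^k L_1(g)$ for all $k \geq 1$, and because $L_1 = l_T$ under our choice of metric, this reads $l_T(g\a^k) = \mu^k l_T(g)$, which is precisely the claim. The only subtlety I want to be careful about is the identification of $L_1$ with $l_T$ and the identification of $L_1(g\a^k)$ with $l_T(g\a^k)$: the quantity $L_r(g\a^k)$ was defined as $L_r(f^k(\gamma_g))$, and I must check that $f^k(\gamma_g)$ is still (up to homotopy) a fundamental domain for the axis of $g\a^k$, so that its metric length computes the translation length rather than merely bounding it. This uses that $f$ is a topological representative (so $f^k$ realises $\a^k$ on conjugacy classes) together with legality: a legal path maps to a legal path with no cancellation, so no backtracking is introduced and the image path is genuinely a geodesic fundamental domain, giving equality rather than inequality.

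The main obstacle, and the step deserving the most care, is this last equality — ensuring that legality of $\gamma_g$ genuinely forces $l_T(g\a^k) = L_1(f^k(\gamma_g))$ with no loss. One must argue that iterating $f$ on a legal loop produces a loop that is still legal (by property (ii) of Lemma~\ref{lem:propertiesofRTTs}) and in particular reduced as a cyclic word, hence tight, so that its length equals the translation length of the element it represents. Everything else — the reduction to a single stratum, the choice of metric, the existence of a legal $g$ — is a direct citation of the irreducible specialisation of the machinery already set up. I would therefore structure the proof as: (1) note $T$ has one stratum and fix the PF-metric so $l_T = L_1$; (2) produce legal hyperbolic $g$ via Lemma~\ref{lem:propertiesofRTTs}(iii); (3) apply Lemma~\ref{lem:propertiesofRTTs}(v) and translate back through $L_1 = l_T$, checking the fundamental-domain identification at each iterate.
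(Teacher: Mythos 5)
Your proof takes essentially the same route as the paper: view the train track map as a relative train track map with a single stratum, so that legal coincides with $1$-legal and $L_1 = l_T$, then invoke Lemma~\ref{lem:propertiesofRTTs}(v). You are somewhat more careful than the paper's terse proof in explicitly citing part~(iii) for the existence of a legal hyperbolic $g$ and in flagging the identification of $L_1(g\a^k)$ with $l_T(g\a^k)$, but both of these are implicit in the paper's appeal to the same lemma.
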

	\begin{proof}
	This is a well-known property of train track maps, but it also follows from Lemma~\ref{lem:propertiesofRTTs} (v) since we can think of a train track map as a relative train track map with only a single stratum. In this specific case, legal and $r$-legal are equivalent conditions, and the $r$-length of a path is equal to its length. Thus the result follows.

	\end{proof}

	\begin{thm}
		Let $\mathcal{G}$ be a free factor system for a group $G$, let $E$ be any relative generating set for $G$, and let $\alpha \in \text{Out}(G,\mathcal{G})$ be irreducible. Then $\text{GR}_{\mathcal{G}}(\a, l_E) = \lambda_\a$.
	\end{thm}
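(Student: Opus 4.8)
The plan is to reduce the irreducible case to the three inequalities already established (in essence) for the general case, but with the crucial simplification that a genuine train track map exists and that its Perron--Frobenius eigenvalue $\mu$ is realised exactly, not merely in a limit. First I would invoke the standard existence theorem: since $\a$ is irreducible, there is a train track map $f:T\to\a T$ on some $T\in\mathcal{O}(G,\mathcal{G})$, and I take the metric on $T$ to be the one determined by the positive row PF-eigenvector of the (single, irreducible) transition matrix, so that $l_T(f(e)) = \mu\, l_T(e)$ for every edge $e$. By Proposition~\ref{prop:use_other_length_functs} together with the theorem identifying $l_E$ with $l_T$ up to relative Lipschitz equivalence, it suffices to prove $\text{GR}_{\mathcal{G}}(\a, l_T) = \lambda_\a$, so I may work entirely with $l_T$.

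Next I would prove the two inequalities. For $\mu \le \text{GR}_{\mathcal{G}}(\a, l_T)$: by Lemma~\ref{lem:TTproperty} there is a hyperbolic $g$ with $l_T(g\a^k) = \mu^k l_T(g)$ for all $k>0$, hence $\text{GR}_{\mathcal{G}}(\a,g) = \limsup_k \sqrt[k]{\mu^k l_T(g)} = \mu$, and taking the supremum over conjugacy classes gives $\text{GR}_{\mathcal{G}}(\a) \ge \mu$. For $\text{GR}_{\mathcal{G}}(\a) \le \lambda_\a$: this is exactly inequality \textbf{B} from the proof of the Main Theorem, whose argument (submultiplicativity of the right stretching factor along the orbit, then extracting $k$-th roots) makes no use of irreducibility, so I would simply cite it. Finally, for $\lambda_\a \le \mu$: here the single-stratum situation lets me avoid the rescaling-and-limit argument entirely. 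Since $f$ is a straight $\mathcal{O}$-map with $l_T(f(e)) = \mu\, l_T(e)$ on every edge, $\text{Lip}(f) = \mu$; by \cite[Lemma 4.3]{francavigliamartino2015} the Lipschitz constant of an $\mathcal{O}$-map equals the right stretching factor between its endpoints, so $\Lambda_R(T,\a T) = \mu$, and therefore $\lambda_\a = \inf_{S}\Lambda_R(S,\a S) \le \mu$. Chaining the three inequalities yields $\mu \le \text{GR}_{\mathcal{G}}(\a) \le \lambda_\a \le \mu$, forcing equality throughout, and in particular $\text{GR}_{\mathcal{G}}(\a, l_E) = \lambda_\a$.

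I expect no serious obstacle here, precisely because irreducibility does all the work up front: the genuinely delicate part of the general theorem — constructing a sequence $T_N$ of trees with $\Lambda_R(T_N,\a T_N)\to\mu_R$ by rescaling strata — collapses to the one-line observation that $\text{Lip}(f)=\mu$ is realised on the nose. The only points requiring a little care are (i) making sure the metric normalisation from the PF-eigenvector indeed gives $l_T(f(e))=\mu\,l_T(e)$ uniformly in $e$ (this is the content of the paragraph preceding Lemma~\ref{lem:TTproperty}, and uses that all edges lie in one stratum), and (ii) checking that $\text{Lip}(f)$ is attained on an edge, which holds because $f$ is straight and $T$ has finitely many edge orbits. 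Neither of these is an obstacle so much as a bookkeeping step.
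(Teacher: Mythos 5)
Your proof is correct, and it takes a genuinely lighter route than the one in the paper. The paper invokes the full Francaviglia--Martino machinery: it cites that $\text{Min}(\a)$ coincides with the train-track bundle $\text{TT}(\a)$ and that $\text{Min}(\a)\neq\emptyset$ for irreducible $\a$, which together give a train track map $f:T\to\a T$ that is \emph{optimal}, so that $\mu=\text{Lip}(f)=\Lambda_R(T,\a T)=\lambda_\a$ is established directly; it then verifies $\text{GR}_{\mathcal{G}}(\a,l_T)=\mu$ by showing $l_T(g\a^k)\le\mu^k l_T(g)$ for all $g$ and exhibiting an $h$ with equality via Lemma~\ref{lem:TTproperty}. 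You instead run the three-inequality chain $\mu\le\text{GR}_{\mathcal{G}}(\a)\le\lambda_\a\le\mu$ exactly as in the Main Theorem, which buys two things: you only ever need \emph{some} train track map on \emph{some} $T$ (not one realising the minimum), and the middle inequality is delegated to the already-proved inequality \textbf{B}, which is independent of irreducibility. The price is that you do not directly exhibit a tree $T$ with $\Lambda_R(T,\a T)=\lambda_\a$ — a piece of extra information the paper's argument makes visible, since the point of singling out the irreducible case is precisely that the infimum is attained rather than approached in a limit. One bookkeeping remark: when you write that [Lemma 4.3] gives $\Lambda_R(T,\a T)=\mu$, what you actually need (and what always holds, for any $\mathcal{O}$-map) is the one-sided bound $\Lambda_R(T,\a T)\le\text{Lip}(f)=\mu$; the equality requires $f$ to be optimal, which you have not arranged, but the inequality suffices for $\lambda_\a\le\mu$, so the chain closes regardless.
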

	\begin{proof}
		$\text{Min}(\a)$ is equal to the train-track bundle $\text{TT}(\a)$ - the set of $T\in \mathcal{O}$ admitting train track representatives $f:T \to \a T$ with $\text{Lip}(f) = \Lambda_R(T,\a T)$ \cite[Thmm 8.19, Thm 6.11]{francavigliamartino2015}. In addition, since $\alpha$ is irreducible, $\text{Min}(\a)$ is non-empty \cite[Theorem 8.4]{francavigliamartino2015}.
		
		Thus we can guarantee the existence of a train track map $f:T\to \alpha T$ on some $T$ such that:
		
		\begin{align*}
			\mu \stackrel{f \text{ is train track}}{=} \text{Lip}(f) \stackrel{f\in \text{TT}(\a)}{=} \Lambda_R(T,\alpha T) \stackrel{\text{by defn of Min}(\a)}{=} \lambda_\a
		\end{align*}
		
		In addition, since $l_E\sim_{\mathcal{G}} l_T$, these length functions will produce the same growth rate.
		
		Therefore in order to prove that $\text{GR}_{\mathcal{G}}(\a, l_E) = \lambda_\a$ it suffices to prove that $\text{GR}_{\mathcal{G}}(\a, l_T) = \mu$. The proof of this follows from the definition of the right stretching factor $\Lambda_R$. Recall,
		\begin{align*}
			\Lambda_R(T,\alpha^k T) := \sup_{g\in \text{Hyp}(\mathcal{G})}\dfrac{l_T(g\alpha^k )}{l_T(g)}
		\end{align*}
		
		$\Rightarrow$ For all $g\in G$, $l_T(g \a^k)\leq \mu^k l_T(g)$.
		
		$\Rightarrow$ For all $g\in G$, $GR_\mathcal{G}(\a, g, l_T) = \limsup_{k\to \infty}\sqrt[k]{l_T(g \a^k)}\leq \limsup_{k\to \infty}\sqrt[k]{\mu^k l_T(g)} = \mu$.
		
		Additionally, by Lemma~\ref{lem:TTproperty}, there exists $h\in G$ such that $\mu^k l_T(h) = l_T(h \a^k)$
		
		$\Rightarrow GR_\mathcal{G}(\a, h, l_T) = \limsup_{k\to \infty}\sqrt[k]{l_T(h \a^k)}= \limsup_{k\to \infty}\sqrt[k]{\mu^k l_T(h)} = \mu$
		
		Thus $GR_\mathcal{G}(\a, l_T) := \sup_{g}GR_\mathcal{G}(\a, g, l_T) = \mu$, and we are done.
	\end{proof}


\end{document}